\numberwithin{equation}{section}
\newtheorem{example}{\sc Example}[section]
\newtheorem{dhef}[example]{\sc Definition}
\newtheorem{lemma}[example]{\sc Lemma}
\newtheorem{remark}[example]{\sc Remark}
\newtheorem{prop}[example]{\sc Proposition}
\newtheorem{theo}[example]{\sc Theorem}
\def\be{\begin{equation}}
\def\ee{\end{equation}}
\def\bc{\begin{cases}}
\def\ec{\end{cases}}
\newcommand{\na}{\mathbb{N}}
\newcommand{\re}{\mathbb{R}}
\newcommand{\rn}{\mathbb{R}^{N}}
\newcommand{\huz}{H^1_0 (\Omega)}
\newcommand{\elle}[1]{L^{#1}(\Omega)}
\def\al{\alpha}
\def\ga{\gamma}
\def\de{\delta}
\def\eps{\varepsilon}
\def\th{\theta}
\def\la{\lambda}
\def\La{\Lambda}
\def\si{\sigma}
\def\vp{\varphi}
\def\dys{\displaystyle}
\long\def\salta#1{\relax}
\def\rife#1{(\ref{#1})}
\def\io{\int_{\Omega}}
\def\norma#1#2{\|#1\|_{\lower 4pt \hbox{$\scriptstyle #2$}}}
\newcommand{\LL}{\>\hbox{\vrule width.2pt \vbox to 7pt{\vfill\hrule width 7pt height.2pt}}\>}
\def\mis{{\rm meas}}
\def\dive{{\rm div}}
\def\D{\nabla}
\def\ol#1{\overline{#1}}
\def\ul#1{\underline{#1}}
\def\pll{($P_{\ell}$)}
\def\vll{v_{\ell}}
\def\rll{R_{\ell}}
\def\bk{\color{black}}
\author[L. Boccardo]{Lucio BOCCARDO}
\address{Lucio Boccardo,
Dipartimento di Matematica,
Universit\`a di Roma ``La Sapienza'',
P.le Aldo Moro 2, 00185 Roma, Italy}
\email{boccardo@mat.uniroma1.it}
\author[T. Leonori]{Tommaso LEONORI}
\address{Tommaso Leonori,
Departamento de An\'alisis Matem\'atico,
Universidad de Granada,
Campus Fuentenueva s/n, 18071 Granada, Spain}
\email{leonori@ugr.es}
\author[L. Orsina]{Luigi ORSINA}
\address{Luigi Orsina,
Dipartimento di Matematica,
Universit\`a di Roma ``La Sapienza'',
P.le Aldo Moro 2, 00185 Roma, Italy}
\email{orsina@mat.uniroma1.it}
\author[F. Petitta]{Francesco PETITTA}
\address{Francesco Petitta,
Departamento de An\'alisis Matem\'atico,
Universitat de Valencia,
C/ Dr. Moliner 50, 46100 
Burjassot, Valencia, Spain}
\email{francesco.petitta@uv.es}
\title[\sc\tiny Singular quasilinear equations]
{\sc\normalsize Quasilinear elliptic equations with singular quadratic growth terms}
\begin{document}

\begin{abstract}
In this paper we deal with positive solutions for singular quasilinear problems whose model is
$$
\bc
-\Delta u + \frac{|\D u|^2}{(1-u)^\gamma}=g & \mbox{in $\Omega$,}\\
\hfill u=0 \hfill & \mbox{on $\partial\Omega$,}
\ec
$$
where $\Omega $ is a bounded open set of $\rn$, $g\geq 0 $ is a function in some Lebesgue space, and $\gamma>0$. 
We prove both existence and nonexistence of solutions depending on the value of $\gamma$ and on the size of $g$. 
\end{abstract}

\maketitle

\section{Introduction and statement of results}

Let $\Omega$ be an open bounded set of $\rn$ ($N\geq 1$). We are interested in the study of singular elliptic quasilinear problems of the type
\be\label{main}
\bc
-\dive (M(x)\D u) + h(u)|\D u|^2=g & \mbox{in $\Omega$,}\\
\hfill u=0 \hfill & \mbox{on $\partial\Omega$,}
\ec
\ee
where $M(x)=(m_{ij}(x))$, $i,j=1,\ldots,N$ is a symmetric matrix whose
coefficients $m_{ij}:\Omega \to \mathbb{R}$ are measurable functions such that:
\begin{equation} \label{a1}
\hfill \alpha |\xi|^2 \leq M(x)\, \xi \cdot \xi \quad \mbox{ and } \quad |M(x)|\leq
\beta\,,
\end{equation}
for almost every $x$ in $\Omega$, for every $\xi$ in $\rn$, where $0< \alpha \leq \beta$.
Moreover we suppose that $g \geq 0$ is a function in $\elle1$. 
As far as the lower order term is concerned, we assume that $h(s)$ is a continuous and strictly increasing function such that
\begin{equation}\label{h1}
h: [0,\si)\to \re^+\,,\quad \sigma>0\,,
\end{equation}
and that
\begin{equation}\label{h2}
\dys \lim_{s\to \si^- } h(s)=+\infty \,.
\end{equation}
To fix the ideas we will often refer to the following model problem 
\be\label{mod}
\bc
-\Delta u + \frac{|\D u|^2}{(1-u)^\gamma}=g & \mbox{in $\Omega$,}\\
\hfill u=0 \hfill & \mbox{on $\partial\Omega$,}
\ec
\ee
where $\gamma>0$, and $g$ belongs to $\elle1$, $g \geq 0$.

\medskip 

The peculiarity of this kind of equations is that the lower order term ``forces'' the solutions to be bounded. This phenomenon, due to the strongly regularizing effect of the lower order term, has already been observed for semilinear equations such as
$$
\bc
-\dive(M(x)\D u) + h(u) = g & \mbox{in $\Omega$,}\\
\hfill u=0 \hfill & \mbox{on $\partial\Omega$,}
\ec
$$
with $h$ as above. It has been proved that existence occurs even if $g\in \elle1$ (see \cite{boc}) and, more in general, if $g$ is a measure (see \cite{dupopo}). 

\medskip

Here we are interested in singular lower order terms depending on the gradient. We recall that existence of an $\huz$ solution for \rife{main} with nonsingular $h$ is nowadays well known: we refer, just to quote some, to the papers \cite{bbm}, \cite{bmps}, \cite{bg3}, \cite{bn} \cite{bgo2} and \cite{brmapo}, where several existence and nonexistence results are proved. 

On the other hand, \bk the study of existence and nonexistence for solution to Dirichlet problems as \rife{main} with a function $h$ which becomes singular at $s = 0$ has been recently carried out. We refer to \cite{a6}, \cite{b-me}, \cite{ablp} and \cite{gm}   for a wide account and further references on this problem. We point out that such a problem is essentially different from the one studied in this paper: indeed in such a case, the main difficulty is to deal with the set where the solution $u$ is ``small'' in order to prove that the lower order term belongs to $\elle1$. On the contrary it is clear that in our paper we have to deal with the zone where $u$ is ``large'', i.e., where $u$ is near $\sigma$. Recently, a similar problem has been studied in \cite{gs} and \cite{hbv}.

\medskip

Our aim is to prove both existence and nonexistence results for problem \rife{main}. We define a solution as follows.

\begin{dhef}\label{def}\rm
A function $u\in\huz$ is a solution for problem \rife{main} if $0 \leq u<\sigma$ a.e. in $\Omega$, $h(u)|\D u|^2\in \elle{1}$, and
$$
\io M(x)\nabla u \cdot \nabla \vp +\io h(u){|\nabla u |^2} \vp = \io g \vp,
$$
for every $\vp\in \huz\cap\elle{\infty}$.
\end{dhef}

Our first existence result is the following.

\begin{theo}\label{t1}\sl
Assume that $g\in \elle1$, $g \geq 0$, and suppose that 
\begin{equation}\label{rad}
\lim_{s \to \si^{-}}\,\int_{0}^{s} \sqrt{h(t)}\, dt = +\infty\,.
\end{equation}
Then \rife{main} has at least a solution.
\end{theo}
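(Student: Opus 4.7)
My plan is a standard approximation--estimates--compactness scheme in which hypothesis \rife{rad} plays the precise role of preventing the approximating solutions from saturating the threshold $\si$.

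For $n\in\na$ set $h_n(s) = \min\{h(s),n\}$, extended by $n$ past $\si$, and $g_n = \min\{g,n\}\in \elle\infty$. Classical results for quasilinear problems with bounded coefficients and natural gradient growth (e.g.\ \cite{bmps}) yield a nonnegative solution $u_n\in\huz\cap\elle\infty$ of
\[
-\dive(M(x)\D u_n)+h_n(u_n)|\D u_n|^2=g_n \quad \text{in }\Omega.
\]
Fix any $s_0\in(0,\si)$; since $h$ is positive and strictly increasing, $h(s_0)>0$. Testing with $T_{s_0}(u_n):=\min\{u_n,s_0\}$ and using \rife{a1}, one obtains, for $n\geq h(s_0)$,
\[
\alpha \io |\D T_{s_0}(u_n)|^2 + s_0\int_{\{u_n>s_0\}}h_n(u_n)|\D u_n|^2 \leq s_0\norma{g}{\elle1}.
\]
Since $h_n(u_n)\geq h(s_0)$ on $\{u_n>s_0\}$, both contributions are under control, giving the uniform bounds $\norma{u_n}{\huz}\leq C$ and $\norma{h_n(u_n)|\D u_n|^2}{\elle1}\leq C$.

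Introduce next $\Psi_n(s)=\int_0^s \sqrt{h_n(t)}\,dt$, a Lipschitz function with $\Psi_n(0)=0$. The $\elle1$ bound above reads $\io |\D\Psi_n(u_n)|^2 \leq C$, so $\Psi_n(u_n)$ is bounded in $\huz$, hence in $L^2(\Omega)$ by Poincar\'e. Now for every $\delta>0$, choose $n$ large enough that $h_n\equiv h$ on $[0,\si-\delta]$; Chebyshev's inequality gives
\[
\Psi(\si-\delta)^2\,|\{u_n>\si-\delta\}| \leq \io \Psi_n(u_n)^2 \leq C,
\]
and \rife{rad} forces $\Psi(\si-\delta)\to+\infty$ as $\delta\to 0^+$. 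Thus $|\{u_n>\si-\delta\}|\to 0$ uniformly in $n$, and any a.e.\ limit of a subsequence of $\{u_n\}$ will satisfy $0\leq u<\si$ a.e.

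Up to subsequences, $u_n \rightharpoonup u$ weakly in $\huz$ and a.e.\ in $\Omega$, with $0\leq u<\si$. The main obstacle is to upgrade weak convergence to strong $\huz$ convergence (or, equivalently, to obtain pointwise convergence of $\D u_n$), which is indispensable in order to pass to the limit in the quadratic gradient term. I would follow a Boccardo--Murat type argument: testing the equations for $u_n-u$ against a function of the form $(u_n-u)\,e^{\la(u_n-u)^2}$ with $\la$ large and exploiting the non-negativity of the lower order term (which absorbs the dangerous cross terms) would yield $\D u_n\to\D u$ a.e. Once this is secured, the bound $u<\si$ a.e.\ gives $h_n(u_n)|\D u_n|^2\to h(u)|\D u|^2$ a.e.; the $\elle1$ bound together with the concentration estimate of the preceding paragraph provides equi-integrability, so Vitali's theorem delivers $\elle1$ convergence of the lower order term. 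Passing to the limit in the weak formulation against $\vp\in\huz\cap\elle\infty$ then yields the identity of Definition~\ref{def}, completing the proof.
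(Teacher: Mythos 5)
Your overall strategy is the same as the paper's: approximate with truncated $h_n$, get the $\huz$ bound and the $\elle1$ bound on the quadratic term (your test with $T_{s_0}(u_n)$ does this correctly), and exploit \rife{rad} through $\Psi_n(u_n)$, Poincar\'e and Chebyshev to show that $\mis(\{u_n>\si-\de\})$ is small uniformly in $n$, whence $u<\si$ a.e. That part matches the paper's proof of Theorem \ref{t1} almost verbatim. The gaps are concentrated in the last step, the strong $\elle1$ compactness of $h_n(u_n)|\D u_n|^2$, which is exactly where the paper's technical work (Proposition \ref{prop} ii), iv), v) and Lemma \ref{cpt}) lies.

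Two specific problems. First, the test function $(u_n-u)\,{\rm e}^{\la(u_n-u)^2}$ cannot absorb the quadratic term: the sign of the lower order term helps only where $u_n\geq u$, while on $\{u_n<u\}$ you must dominate $h_n(u_n)|\D u_n|^2$ times the (negative) test function, and there $h_n(u_n)\leq h(u)$ is \emph{not} uniformly bounded because $u$ gets arbitrarily close to $\si$; no fixed $\la$ works. The paper sidesteps this by proving strong convergence only of the truncates $T_k(u_n)$, $k<\si$, using $\vp_\eta(T_k(u_n)-T_k(u))$ with $\eta$ chosen in terms of $h(k)$, and it obtains the a.e. convergence of $\D u_n$ not from such a test but from the Boccardo--Murat theorem applied to $-\dive(M\D u_n)=g_n-h_n(u_n)|\D u_n|^2$, whose right-hand side is bounded in $\elle1$. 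Second, equi-integrability of $h_n(u_n)|\D u_n|^2$ does not follow from ``the $\elle1$ bound plus the concentration estimate'': your estimate only yields $\int_{\{u_n>s_0\}}h_n(u_n)|\D u_n|^2\leq\norma{g}{\elle1}$, a tail bound that does not shrink as $s_0\to\si$, and smallness of $\mis(\{u_n>\si-\de\})$ cannot by itself control the integral of a quantity whose equi-integrability is precisely what you are trying to prove. What is needed is the refined level-set estimate \rife{diseg}, obtained by testing with $\frac1\eps T_\eps(G_k(u_n))$, namely $\int_{\{u_n\geq k\}}h_n(u_n)|\D u_n|^2\leq\int_{\{u_n\geq k\}}g$: then the measure estimate plus the absolute continuity of the integral of $g$ makes the tail small, while on $E\cap\{u_n\leq k\}$ one bounds by $h(k)\int_E|\D T_k(u_n)|^2$ and uses the strong convergence of the truncates. (Minor further points: with $h_n=\min(h,n)$ you should still arrange $h_n(s)s\geq0$ for $s<0$ and continuity at $0$ when $h(0)>0$ to invoke the existence results for \rife{appn}, and on $\{u=0\}$ you need Stampacchia's fact that $\D u=0$ a.e. there to get the a.e. convergence of the quadratic term.)
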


\begin{remark}\label{whye}\rm
Observe that Theorem \ref{t1} covers the case $\gamma\geq 2$ in the model problem \rife{mod}. In general, the fact that if $\sqrt{h}$ does not belong to $L^{1}((0,\si))$, then there exists a solution is not surprising. Indeed, if we define
$$
\Phi(u) = \int_{0}^{u}\,\sqrt{h(s)}\,ds\,,
$$
the condition $h(u)|\D u|^{2}$ in $\elle1$ can be rewritten as
$$
|\D \Phi(u)|^{2} \in \elle1\,,
\quad
\mbox{i.e.,}
\quad
\Phi(u) \in \huz\,.
$$
Thus, by Poincar\'e inequality, $\Phi(u)$ belongs to $\elle2$, and so is almost everywhere finite. Since $\sqrt{h}$ is not in $L^{1}((0,\si))$, this fact implies that $u < \si$ almost everywhere in $\Omega$. In other words, the condition ``$u$ does not touch $\si$'' is automatically satisfied starting from the formulation. Note that if $\sqrt{h}$ belongs to $L^{1}((0,\si))$, then $u$ may be well equal to $\si$ on a set of positive measure without any contradiction, since in this case the function $\Phi(u)$ is finite no matter the values taken by $u$. As we will see, this fact will lead to nonexistence of solutions for \rife{main} if the datum $g$ is ``large'' (recall that we need for a solution to be almost everywhere smaller than $\si$).
\end{remark}

Thanks to the result of Theorem \ref{t1}, we are left to deal with the case of $\sqrt{h}$ in $L^{1}((0,\si))$.
This corresponds to the case $0<\gamma<2$ in the model case \rife{mod}. 
To better understand what happens in this case, let us give a one-dimensional example.

\begin{example}\label{gauno}\rm
Consider the following problem:
\be\label{pb1d}
\bc
-u''(s) + \frac{(u'(s))^{2}}{1 - u(s)} = \la & \mbox{in $(-1,1)$,} \\
\hfill u(-1) = u(1) = 0\,, \hfill
\ec
\ee
and suppose that \rife{pb1d} has a $C^{2}$ solution for every $\la > 0$, with $u(s) < 1$ almost everywhere in $(-1,1)$. If $\la = 6$ there exists an explicit solution: $w(s) = 1 - s^{2}$. Let now $u$ be a solution of \rife{pb1d} with $\la > 6$; multiplying the equation by $1-u$ we obtain
$$
-(1-u)u'' + (u')^{2} = \la\,(1-u)\,,
$$
which can be rewritten as
$$
-\left( u - \frac{u^{2}}{2}\right)'' = \la\,(1-u)\,.
$$
Defining $v = u - u^{2}/2$ (and, correspondingly, $z = w - w^{2}/2$), we have
$$
\bc
-v''(s) = \la\sqrt{1 - 2v(s)} \\
\hfill v(-1) = v(1) = 0\,, \hfill
\ec
\quad
\mbox{and}
\quad
\bc
-z''(s) = 6\sqrt{1 - 2z(s)} \\
\hfill z(-1) = z(1) = 0\,. \hfill
\ec
$$
Subtracting the first equation from the second, multiplying by $(z-v)^{+}$ and integrating on $(-1,1)$ yields (since $\la > 6$)
$$
\int_{-1}^{1}\,|((z-v)^{+})'|^{2}\,ds
\leq
\la\int_{-1}^{1}\,[\sqrt{1-2z} - \sqrt{1-2v}](z-v)^{+}\,ds \leq 0\,.
$$
Therefore, $((z-v)^{+})' \equiv 0$, which implies $(z-v)^{+} \equiv 0$. Hence, $v(s) \geq z(s)$,
and so $u(s) \geq w(s) = 1 - s^{2}$ in $[-1,1]$, which implies $u(0) = 1$. Since $u$ belongs
to $C^{2}$, then $u(s) = 1 + \frac{u''(0)}{2}s^{2} + {\rm o}(s^{2})$, and we can use the equation to deduce, thanks to the de l'H\^{o}pital rule, 
$$
\la = \lim_{s \to 0^{+}}\,\left[-u''(s) + \frac{(u'(s))^{2}}{1 - u(s)} \right]
= \lim_{s \to 0^{+}}\,-3u''(s) = -3u''(0)\,.
$$
Therefore, $u(s) = 1 - \frac{\la}{6}s^{2} + {\rm o}(s^{2})$. Since $\la > 6$, 
this fact contradicts $u(s) \geq 1 - s^{2}$, and so \rife{pb1d} can not have solutions such 
that $u < 1$ almost everywhere for every $\la > 6$. On the other hand, if $\la < 6$ then any 
solution $u$ of \rife{pb1d} is smaller than $w$ (with the same proof as above), and so satisfies $u(s) \leq 1 - s^{2}$. If $u(0) = 1$ then we have $u''(0) = -\frac{\la}{3}$ and so $u(s) = 1 - \frac{\la}{6}s^{2} + {\rm o}(s^{2})$, a fact that contradicts $u(s) \leq 1 - s^{2}$ since $\la < 6$. Therefore, any solution $u$ of \rife{pb1d} with $\la < 6$ satisfies $u(s) \leq u(0) < 1$. In other words, we have that any solution $u$ of \rife{pb1d} is strictly smaller than~1 if $\la < 6$, ``touches''~1 at the origin if $\la > 6$, and cannot be almost everywhere smaller than~1 if $\la > 6$.

Note that if we consider \rife{pb1d} in $(-R,R)$, we have by rescaling that
$w(s) = 1 - (s/R)^{2}$ is a solution with $6/R^{2}$ as datum, and that we have a contradiction
with existence of solutions if $\la > 6/R^{2}$.
\end{example}

In view of the preceding example, if \rife{rad} is not satisfied one can expect 
that \rife{main} has always solutions if the size of $g$
is small, 
while no solutions are expected if the size of $g$ is large enough (depending both on $h$ and on the size of $\Omega$). 

Let us clarify this fact by introducing the following 

\begin{dhef}\label{capla}\rm
Let $f$ in $\elle p$, $p > \frac N2$, $f \geq 0$, $\la > 0$, and consider the following problem
\be\label{mainla}
\bc
-\dive(M(x)\D u) + h(u)|\D u|^2=\la\, f & \mbox{in $\Omega$},\\
\hfill u=0 \hfill & \mbox{on $\partial\Omega$.}
\ec
\ee
We define $\Lambda_{f}$ as follows:
$$
\Lambda_{f}=\sup \{\la >0: \exists u\leq\si-\eps \mbox{, for some $\eps > 0$, $u$ solution of } \rife{mainla}\}\,, 
$$
where we use the convention that $\sup \emptyset = 0$.  
\end{dhef}

\begin{remark}\label{supnonmax}\rm
The definition of $\Lambda_{f}$ may seem strange, since we require the solutions to \rife{mainla} to be strictly smaller than $\sigma$, a fact that is not true for the solution $u = 1 - s^{2}$ obtained in Example \ref{gauno} for $\la = 6$. This restriction is purely technical, is needed for the proof of Theorem \ref{t2+} below, and, as in the case of the example above, may yield that $\Lambda_{f}$ is an actual supremum, rather than a maximum. Note indeed that in Example \ref{gauno} we have
$$
\{\la >0: \exists u\leq\si-\eps \mbox{, for some $\eps > 0$, $u$ solution of } \rife{pb1d}\} = (0,6)\,.
$$
See also Section~6 for further remarks on $\Lambda_{f}$.
\end{remark}

We have the following existence result for $\la$ small (without growth assumptions on $h$).

\begin{theo}\label{t2}\sl
Let $f \in \elle{p}$, $p>\frac{N}{2}$, $f \geq 0$. Then there exists $\Lambda$ such that for every $\la < \Lambda$ problem \rife{mainla} has a solution (smaller than $\si-\eps$ for some $\eps = \eps(\la) > 0$). Therefore, $\Lambda_{f} > 0$.
\end{theo}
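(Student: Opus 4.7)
The plan is to build a supersolution from an auxiliary linear problem and then obtain an actual solution by approximation that inherits the resulting $\elle{\infty}$ bound, so that it stays uniformly separated from $\si$.

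As a first step I would consider the linear problem $-\dive(M(x)\D v)=\la f$ with homogeneous Dirichlet data. Since $f\in\elle{p}$ with $p>N/2$, Stampacchia's classical $L^\infty$ estimate yields $v\in\huz\cap\elle{\infty}$ with $\|v\|_{\elle{\infty}}\leq C\la\|f\|_{\elle{p}}$ and $v\geq 0$, where $C$ depends only on $N$, $p$, $\al$ and $|\Omega|$. Choosing $\La$ so that $C\La\|f\|_{\elle{p}}\leq\si/2$ forces $0\leq v\leq\si/2$ for every $\la<\La$, and since $h(v)|\D v|^2\geq 0$, $v$ is a supersolution of \rife{mainla}.

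Next I would approximate the singular coefficient by a bounded continuous truncation, say $h_n(s)=h(\min(s^+,\si-\tfrac1n))$. Since $h_n$ is bounded, the classical existence theory for quasilinear equations with natural growth (cf.\ \cite{bbm}, \cite{bgo2}, \cite{brmapo}) produces a nonnegative bounded solution $u_n\in\huz\cap\elle{\infty}$ of the associated approximate equation. The crucial comparison step is to use $(u_n-v)^+$ as test function in both the approximate equation and in the equation for $v$: after subtracting, the ellipticity of $M$ together with the nonnegative sign of $h_n(u_n)|\D u_n|^2$ gives $\al\io|\D(u_n-v)^+|^2\leq 0$, whence $u_n\leq v\leq\si/2$ a.e.\ in $\Omega$, uniformly in $n$.

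This uniform separation from $\si$ makes $h(u_n)$ uniformly bounded, and testing with $u_n$ yields a uniform $\huz$ estimate. The main obstacle, intrinsic to equations with natural growth, is the strong $\huz$ convergence of $u_n$ needed to pass to the limit in the term $h_n(u_n)|\D u_n|^2$; I would handle it by the standard Boccardo-Murat technique for almost everywhere convergence of the gradients, testing the approximate equation with $\psi(u_n-u)$ where $\psi(t)=te^{\mu t^2}$ and $\mu$ is chosen so that the linear contribution absorbs the quadratic gradient terms. Once this is secured, the limit $u$ lies in $\huz$, satisfies $0\leq u\leq\si/2=\si-\eps$ with $\eps=\si/2$, and solves \rife{mainla} in the sense of Definition~\ref{def}; in particular $\La_f\geq\La>0$.
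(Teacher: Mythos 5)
Your proposal is correct in substance and rests on the same mechanism as the paper's proof: a uniform $\elle\infty$ bound of order $\la\,\norma{f}{\elle p}$ that keeps the approximate solutions at distance $\eps$ from $\si$, after which the problem is effectively a nonsingular natural-growth problem and the limit can be taken. The execution differs slightly: the paper takes $G_{k}(u_{n})$ as test function in the approximate equation, drops the nonnegative quadratic term and applies Stampacchia's estimate directly to $u_{n}$, obtaining $\norma{u_n}{\elle\infty}\leq \la C_{0}\norma{f}{\elle p}$ in one stroke; you instead apply Stampacchia to the solution $v$ of the linear problem and transfer the bound to $u_{n}$ by testing the difference of the two equations with $(u_{n}-v)^{+}$ (an admissible test function, since both $u_n$ and $v$ lie in $\huz\cap\elle\infty$). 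The two routes are equivalent, both hinging on the sign of the quadratic term; your supersolution version costs one extra (harmless) comparison lemma. Similarly, once $u_{n}\leq\si-\eps$ uniformly, the compactness argument you sketch (exponential test functions \`a la Bensoussan--Boccardo--Murat, a.e.\ convergence of gradients, Vitali) is precisely what the paper has already packaged in Proposition \ref{prop} and Lemma \ref{cpt}, which it simply invokes at this point; redoing it for the uniformly bounded case is legitimate, just redundant.

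One detail should be repaired: your truncation $h_{n}(s)=h(\min(s^{+},\si-\tfrac1n))$ equals $h(0)$ for $s\leq 0$, so when $h(0)>0$ it violates the sign condition $h_{n}(s)\,s\geq 0$ on which the existence result of \cite{bbm} is based, and which you also use implicitly to claim $u_{n}\geq 0$ (testing with the negative part, the quadratic term has the wrong sign on $\{u_{n}<0\}$, so nonnegativity is not automatic). This is exactly the point of the paper's definition \rife{hn} and of Remark \ref{h00}: set $h_{n}=0$ for $s\leq 0$ and interpolate linearly on $[0,\tfrac1n]$ (or assume $h(0)=0$). With that modification, and with the comparison step justified as above, your argument goes through and yields $\Lambda_{f}\geq\Lambda>0$ as claimed.
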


In the particular case in which the principal part is the laplacian, we can be much more precise. 

\begin{theo}\label{t2+}\sl
Let $f$ be as in Theorem \ref{t2} and let $M(x) \equiv I$, the identity matrix. Then \rife{mainla} has a unique solution $u_{\la}$ for every
$\lambda<\Lambda_f$. Moreover, $u_{\la} \leq u_{\mu}$ if $\la < \mu$.
\end{theo}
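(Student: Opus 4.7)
The plan is to perform a Cole--Hopf-type change of variable that reduces \rife{mainla} (with $M \equiv I$) to a semilinear equation with a monotone (in fact, decreasing) reaction term, and then to argue on the transformed problem, where uniqueness, comparison, and the use of sub- and supersolutions are essentially automatic. Setting $H(t) = \int_{0}^{t} h(\tau)\,d\tau$ and $\Phi(s) = \int_{0}^{s} e^{-H(t)}\,dt$ for $s \in [0,\si)$, the function $\Phi$ is $C^{1}$ and strictly increasing, with $\Phi'(s) = e^{-H(s)} \in (0,1]$; denote $V_{\si} = \lim_{s \to \si^{-}} \Phi(s) \in (0,+\infty]$. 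A direct chain-rule computation (based on $\D v = e^{-H(u)}\D u$ and $-\Delta v = e^{-H(u)}[-\Delta u + h(u)|\D u|^{2}]$) shows that if $u \in \huz$, $0 \leq u < \si$ a.e., solves \rife{mainla} at level $\la$ in the sense of Definition \ref{def}, then $v := \Phi(u) \in \huz$ solves
$$
-\Delta v = \la\, f\, F(v) \quad\mbox{in $\Omega$,} \qquad v = 0 \quad\mbox{on $\partial\Omega$,}
$$
where $F(v) := e^{-H(\Phi^{-1}(v))}$ is continuous and strictly decreasing on $[0,V_{\si})$ with $F(0) = 1$, and conversely; the extra constraint $u \leq \si - \eps$ corresponds to $v \leq \Phi(\si-\eps) < V_{\si}$.

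Uniqueness and monotonicity for the semilinear problem are immediate. Given solutions $v_{1}, v_{2}$ at parameters $\la_{1} \leq \la_{2}$, testing the difference of the corresponding equations against $(v_{1} - v_{2})^{+} \in \huz$ yields
$$
\io \bigl|\D (v_{1} - v_{2})^{+}\bigr|^{2} = \io f\,\bigl[\la_{1}F(v_{1}) - \la_{2}F(v_{2})\bigr](v_{1} - v_{2})^{+} \leq 0,
$$
since on $\{v_{1} \geq v_{2}\}$ one has $\la_{1} F(v_{1}) \leq \la_{1} F(v_{2}) \leq \la_{2} F(v_{2})$ ($F$ decreasing, $f \geq 0$). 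Hence $v_{1} \leq v_{2}$, which gives uniqueness (case $\la_{1} = \la_{2}$) and, after pulling back through $\Phi^{-1}$, the monotonicity $u_{\la} \leq u_{\mu}$ claimed in the statement.

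For existence, fix $\la < \Lambda_{f}$ and, by Definition \ref{capla}, pick $\mu \in (\la, \Lambda_{f})$ together with a solution $u_{\mu}$ of \rife{mainla} at level $\mu$ with $u_{\mu} \leq \si - \eps$, so that $v_{\mu} := \Phi(u_{\mu}) \in \huz \cap \elle{\infty}$ satisfies $v_{\mu} \leq \Phi(\si-\eps) < V_{\si}$. Rewrite the target equation as $-\Delta v + \la f\,[1-F(v)] = \la f$; because $F$ is decreasing and $f\geq 0$, the reaction $v \mapsto \la f\,[1-F(v)]$ is monotone non-decreasing. Extending $F$ continuously to a non-increasing bounded function on all of $\re$ (constant on $(-\infty,0]$ and, if $V_{\si}<+\infty$, also on $[V_{\si},+\infty)$), the resulting operator from $\huz$ into $H^{-1}(\Omega)$ is strictly monotone, hemicontinuous, and coercive, so the Browder--Minty theorem yields a unique $v \in \huz$. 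A direct check shows that $0$ is a subsolution and $v_{\mu}$ is a supersolution of the same equation (indeed $-\Delta v_{\mu} + \la f[1-F(v_{\mu})] = \la f + (\mu-\la) f F(v_{\mu}) \geq \la f$); the comparison argument from the previous paragraph, adapted to the monotone reaction, then forces $0 \leq v \leq v_{\mu}$. Therefore $u := \Phi^{-1}(v) \in \huz$ is well defined with $0 \leq u \leq \si - \eps$, and reading the first-step computation backwards shows that $u$ solves \rife{mainla} at level $\la$ in the sense of Definition \ref{def}.

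The main technical obstacle I expect is the rigorous bookkeeping in the equivalence between the two formulations: one must verify that the chain-rule identities above hold at the $\huz$ level (which is precisely where the boundedness of $u$ away from $\si$ and of $F$ away from $0$ is essential), and that the class of admissible test functions in Definition \ref{def} for the quasilinear problem corresponds to the class of $\huz$ test functions for the semilinear one. Once this is settled, everything else reduces to standard monotonicity bookkeeping for a semilinear equation with a non-increasing reaction term.
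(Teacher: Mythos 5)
Your argument is correct, but it takes a genuinely different route from the paper's for the existence step, so a comparison is in order. The paper does not change variables in this proof: it truncates $h$ at the level $\si-\eps$ (the $\eps$ coming from a solution at some $\mu\in(\la,\La_{f})$ lying below $\si-\eps$), obtains a solution of the truncated quasilinear problem at level $\la$ from the results of \cite{bbm}, and then proves $u\le v\le\si-\eps$ by testing the two equations with ${\rm e}^{-\overline{H}(u)}(\psi(u)-\psi(v))^{+}$ and ${\rm e}^{-\overline{H}(v)}(\psi(u)-\psi(v))^{+}$, the hypothesis $M\equiv I$ serving only to cancel the two quadratic terms; uniqueness and the monotonicity in $\la$ follow from the same test functions. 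You instead perform the substitution $v=\Phi(u)$ globally --- exactly the transformation the paper introduces in Section~4 (your $F$ is the paper's $g$) --- solve the semilinear problem by Browder--Minty after extending $F$, and sandwich the solution between $0$ and $v_{\mu}$; your $(v_{1}-v_{2})^{+}$ comparison is the paper's exponential test-function computation read after the change of variables, so the uniqueness and monotonicity parts coincide in substance. What your route buys is a cleaner monotone structure, with no truncation of $h$ and no appeal to the quasilinear existence theory of \cite{bbm}; what the paper's route buys is that it never has to justify the equivalence of the two weak formulations. That equivalence is indeed the one point to treat carefully, as you note: it is harmless in your existence step, where $u\le\si-\eps$ makes ${\rm e}^{\pm H(u)}$ and $h(u)$ bounded, but for uniqueness and for $u_{\la}\le u_{\mu}$ you apply it to arbitrary solutions in the sense of Definition~\ref{def}, and then $\D\bigl({\rm e}^{-H(u)}\bigr)=-h(u){\rm e}^{-H(u)}\D u$ is not obviously in $L^{2}$ knowing only $h(u)|\D u|^{2}\in\elle1$ (for instance when $h\in L^{1}((0,\si))$, so that ${\rm e}^{-H(u)}$ stays bounded away from zero). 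The paper's own uniqueness argument uses the same exponential test functions and faces the same admissibility issue, so this is a shared technical point rather than a gap specific to your proposal; a truncation of the test functions (or of $h$) settles it in either formulation.
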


In order to prove the preceding result, we will compare solutions of \rife{mainla}, with $M(x) \equiv I$, that correspond to different values of $\lambda>0$. In the general case, comparison principles for $\huz$ solutions of \rife{mainla} are difficult to obtain, and to our knowledge the only results in this direction are contained in \cite{as} and \cite{bamu} (see also Remark \ref{compar} below).

\medskip

If, in addition, $\sqrt{h}$ belongs to $L^{1}((0,\si))$) we will show that $\La_{f} < +\infty$. 
In order to give a sharper statement of this result, let us define $\la_{1}(f)$ and $\vp_{1}(f)$ as the first eigenvalue and the first eigenfunction of $-\dive(M(x)\D u)$ in $\Omega$ with weight $f\gvertneqq 0$ in $\elle p$, $p > \frac N2$, i.e.,
$$
\bc
-\dive(M(x)\D\vp_{1}(f)) = \la_{1}(f)f(x)\vp_{1}(f) & \mbox{in $\Omega$,}\\
\hfill \vp_{1}(f) = 0 \hfill & \mbox{on $\partial\Omega$.} 
\ec
$$
We recall that $\vp_{1}(f)$ belongs to $\elle\infty$, that $\vp_{1}(f) > 0$ in $\Omega$, and that
\be\label{poincar}
\io \,M(x)\D u \cdot \D u \geq \la_{1}(f)\,\io \,f(x)\,u^{2}\,,
\quad
\forall u \in \huz\,.
\ee
We also define ($\alpha$ is given by \rife{a1}):
\be\label{hpsi}
H(s) = \frac 1\alpha\,\int_{0}^{s}\,h(t)\,dt\,,
\quad
\mbox{and}
\quad
\psi(u) = \int_{0}^{s}\,{\rm e}^{-H(t)}\,dt\,.
\ee

\noindent Our first nonexistence result is the following.

\begin{theo}\label{nnhl1}\sl
Let $f$ be in $\elle{p}$, $p > \frac N2$, $f \gvertneqq  0$.
Suppose that $h$ belongs to $L^{1}((0,\si))$, i.e., that
$$
H(\si) = \frac{1}{\al}\,\int_{0}^{\si}\,h(s)\,ds<+\infty\,,
$$
and let $\la > \la_{1}(f) {\rm e}^{H(\si)}\psi(\si)$.
Then there exists no weak solution for \rife{mainla}. Therefore, $\La_{f} < +\infty$.
\end{theo}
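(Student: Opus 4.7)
The approach is a Hopf--Cole type change of variable $v=\psi(u)$ combined with testing against the positive principal eigenfunction $\vp_1(f)$. Argue by contradiction: suppose a weak solution $u$ in the sense of Definition \ref{def} exists for some $\la > \la_1(f)\,{\rm e}^{H(\si)}\psi(\si)$. Since $H(\si)<\infty$, the function $\psi$ is Lipschitz on $[0,\si]$ with derivative $\psi'(t)={\rm e}^{-H(t)}\in[{\rm e}^{-H(\si)},1]$, so $v=\psi(u)\in\huz\cap\elle{\infty}$, $\D v={\rm e}^{-H(u)}\D u$, and $0\le v\le\psi(\si)$.

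The computational heart is to test the equation against $\vp={\rm e}^{-H(u)}\vp_1(f)$, which belongs to $\huz\cap\elle{\infty}$ because both factors do. Using $H'(s)=h(s)/\al$ to differentiate the exponential,
$$
\io M(x)\D u\cdot\D\vp=-\io \frac{h(u)}{\al}\,{\rm e}^{-H(u)}\vp_1(f)\,M(x)\D u\cdot\D u+\io {\rm e}^{-H(u)} M(x)\D u\cdot\D\vp_1(f).
$$
Substitute in the weak form of Definition \ref{def}: the quadratic lower order term contributes $\io h(u)|\D u|^2 {\rm e}^{-H(u)}\vp_1(f)$. By ellipticity \rife{a1} and the positivity of $\vp_1(f)$,
$$
h(u)|\D u|^2\,{\rm e}^{-H(u)}\vp_1(f)\le \frac{h(u)}{\al}\,{\rm e}^{-H(u)}\vp_1(f)\,M(x)\D u\cdot\D u,
$$
so the two quadratic pieces combine with a favourable sign and one is left with
$$
\io {\rm e}^{-H(u)} M(x)\D u\cdot\D\vp_1(f)\ge\la\io f\,{\rm e}^{-H(u)}\vp_1(f)\ge\la\,{\rm e}^{-H(\si)}\io f\,\vp_1(f).
$$

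The left-hand side is exactly $\io M(x)\D v\cdot\D\vp_1(f)$; testing the eigenvalue equation for $\vp_1(f)$ with $v\in\huz$ and using symmetry of $M(x)$, this equals $\la_1(f)\io f\,v\,\vp_1(f)\le \la_1(f)\,\psi(\si)\io f\,\vp_1(f)$ since $v\le\psi(\si)$. Because $f\gvertneqq 0$ and $\vp_1(f)>0$ in $\Omega$, the quantity $\io f\,\vp_1(f)$ is strictly positive, so one may divide and obtain $\la\le \la_1(f)\,{\rm e}^{H(\si)}\psi(\si)$, contradicting the choice of $\la$. The point requiring care is keeping all inequalities oriented correctly when invoking ellipticity: the quadratic term \emph{in the equation} must be \emph{dominated} by the quadratic term produced by differentiating ${\rm e}^{-H(u)}$, which is precisely why the multiplier ${\rm e}^{-H(u)}\vp_1(f)\ge 0$ is the right choice and why the threshold involves $\al$ only implicitly, through the definition of $H$.
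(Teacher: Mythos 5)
Your proposal is correct and follows essentially the same route as the paper: the same test function ${\rm e}^{-H(u)}\vp_{1}(f)$, the same cancellation of the quadratic terms via the ellipticity constant $\al$ in the definition of $H$, and the same duality with the eigenvalue equation through $\psi(u)$. The only cosmetic difference is that you bound $\psi(u)\leq\psi(\si)$ and ${\rm e}^{-H(u)}\geq{\rm e}^{-H(\si)}$ pointwise and divide by $\io f\,\vp_{1}(f)>0$, whereas the paper packages the same monotonicity into the auxiliary function $\Theta(s)=\la_{1}(f)\psi(s)-\la\,{\rm e}^{-H(s)}$; both (like the paper) tacitly assume ${\rm e}^{-H(u)}\vp_{1}(f)$ is admissible in $\huz\cap\elle\infty$.
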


In view of Theorem \ref{nnhl1}, we are left with the case
$\sqrt{h}$ in $L^{1}((0,\si))$, but $h$ not in $L^{1}((0,\si))$ (i.e. $1\leq \gamma<2$ in \rife{mod}). 
In this case we have to make additional assumptions on $h$, $M$ and $f$.

\begin{theo}\label{nhnl1}\sl
Let $f$ be in $\elle{p}$, $p > \frac N2$.
Let $M(x) \equiv I$, and suppose that $h$ and $f$ are such that
\be\label{assumpt2}
\bc
\mbox{there exists } \ga \in [1,2) : \lim\limits_{s \to \si^{-}}\,(\si-s)^{\ga}\,h(s) = C > 0\,, \\
\hfill \mbox{there exists } \rho > 0 : f(x) \geq \rho \mbox{ in $\Omega$\,.} \hfill
\ec
\ee
Then \rife{mainla} has no solutions for $\la$ large enough, so that $\La_{f} < +\infty$.
\end{theo}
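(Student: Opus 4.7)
The plan is to extend the one-dimensional contradiction of Example~\ref{gauno} to higher dimensions by means of an explicit radial subsolution on a ball $B_R\Subset\Omega$. Using $M\equiv I$, $f\ge\rho>0$, and weak comparison (cf.\ Theorem~\ref{t2+}), it suffices to rule out existence for the reduced problem
\be\label{ballplan}
\bc
-\Delta w + h(w)|\D w|^{2}=\la\rho & \mbox{in $B_R$,}\\
\hfill w=0 \hfill & \mbox{on $\partial B_R$,}
\ec
\ee
with $w<\si$ a.e. Indeed, if $u$ solves \rife{mainla} on $\Omega$, its restriction to $B_R$ is a supersolution of \rife{ballplan}, so nonexistence for \rife{ballplan} at large $\la$ forces $\La_{f}<+\infty$.

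The subsolution is suggested by the scaling of the singular quadratic term under \rife{assumpt2}: the balance $h(\si-s)\,s^{2}\sim s^{2-\ga}$ picks out the exponent $a:=2/(2-\ga)\in[2,\infty)$. Setting
$$\underline w(r) \;=\; \si\bigl(1-(r/R)^{a}\bigr), \qquad r\in[0,R],$$
a direct computation using $h(s)\sim C(\si-s)^{-\ga}$ shows that $h(\underline w)|\D\underline w|^{2}$ is, to leading order, a constant times $R^{-2}$ (precisely because $a(2-\ga)=2$), while $-\Delta\underline w=\si a(a+N-2)R^{-a}r^{a-2}$ is bounded on $[0,R]$ by a constant times $R^{-2}$. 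Therefore $\underline w$ is a subsolution of \rife{ballplan} once $\la\rho$ exceeds an explicit threshold $\la^{*}(R,N,\ga,\si,C)$, and comparison gives $w\ge\underline w$ on $B_R$; combined with $w\le\si$, this forces $w(0)=\si$.

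From here the contradiction with $w<\si$ a.e.\ is obtained as in Example~\ref{gauno}, via the radial $C^{2}$ regularity of $w$ near the origin and the Taylor expansion $w(r)=\si+\tfrac12 w''(0)r^{2}+o(r^{2})$. For $\ga\in(1,2)$ the singular term $|w'|^{2}/(\si-w)^{\ga}$ behaves like $r^{2-2\ga}\to+\infty$ as $r\to 0^{+}$, while the remaining terms of \rife{ballplan} stay bounded — an immediate contradiction. For $\ga=1$ the de l'H\^opital computation of Example~\ref{gauno} forces $w''(0)=-\la\rho/(N+2C)$, so $w(r)=\si-\tfrac{\la\rho}{2(N+2C)}r^{2}+o(r^{2})$, and combining with $w\ge\underline w=\si(1-(r/R)^{2})$ gives $\la\rho\le 2\si(N+2C)/R^{2}$, contradicting $\la$ large.

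The main technical obstacle is the comparison step $w\ge\underline w$: as Remark~\ref{compar} stresses, weak comparison is not automatic for quasilinear equations with singular quadratic gradient growth. The cleanest route is to first apply the Kazdan--Kramer change of variables $v=\psi(u)$ --- available since $M=I$ --- turning the problem into the concave semilinear $-\Delta v=\la f G(v)$ with $G=\psi'\circ\psi^{-1}$ decreasing on $[0,\psi(\si)]$; standard semilinear sub/supersolution comparison then applies to $\psi(\underline w)$ and $\psi(w)$, and the conclusion is transferred back to $u$. A secondary point is the $C^{2}$ radial regularity of $w$ near the origin in the presence of the singular coefficient, which follows from interior elliptic regularity combined with the fact that the touching set $\{w=\si\}$ reduces to the single point $r=0$.
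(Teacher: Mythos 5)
Your first two steps (the explicit profile $\underline w(r)=\si\bigl(1-(r/R)^{a}\bigr)$ with $a=2/(2-\ga)$, the computation showing $-\Delta\underline w+h(\underline w)|\D\underline w|^{2}\leq K R^{-2}$, and the comparison $u\geq\underline w$ in $B_R$ carried out after the change of variables $v=\psi(u)$, as in Lemma \ref{subsup}) are sound. The gap is in the final step. Your subsolution reaches $\si$ only at the single point $r=0$, so the comparison only yields $u(x_{0})=\si$ at one point, which does \emph{not} contradict the requirement $u<\si$ a.e.\ (indeed Section~6 exhibits admissible solutions touching $\si$ at exactly one point). To turn this into a contradiction you invoke radial symmetry and $C^{2}$ regularity of the solution up to the touching point and then Taylor-expand as in Example \ref{gauno}; but the solution of \rife{mainla} is merely an $\huz$ function on a general domain, and even for the reduced radial problem the coefficient $h(u)$ blows up precisely at the touching point, so ``interior elliptic regularity'' does not give $C^{2}$ there --- this is not a secondary point, it is the crux, and the paper itself only uses such expansions in Section~6 under an a priori $C^{2}$ assumption, never in the proof of Theorem \ref{nhnl1}. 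Moreover, even granting radial $C^{2}$ regularity, your ``immediate contradiction'' for $\ga\in(1,2)$ is wrong: the bound $\si-w\leq\si(r/R)^{a}$ with $a>2$ forces $w''(0)=0$, and then (cf.\ case iii) of Section~6) the quadratic singular term stays bounded near the origin, so nothing blows up; a contradiction can only be quantitative in $\la$, obtained through the finer first-order analysis in the variable $G(s)=\int_{0}^{s}h(t)^{1/2}\,dt$, not from the expansion you wrote.

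The good news is that your explicit profile can be repaired so as to recover the paper's actual mechanism, which is measure-theoretic rather than pointwise: the paper builds (via the shooting analysis of Theorems \ref{pill} and \ref{sotto}) a subsolution of the transformed semilinear problem \rife{slin} that equals $L=\psi(\si)$ on a set of \emph{positive measure}, extends it by zero, and concludes from Lemma \ref{subsup} that the unique solution $v$ of \rife{slin} satisfies $\mis(\{v=L\})>0$, contradicting $v<L$ a.e. You can achieve the same by flattening your profile: set $\underline w\equiv\si$ on $B_{R/2}(x_{0})$ and $\underline w=\si\bigl(1-\bigl(\tfrac{2(|x-x_{0}|-R/2)}{R}\bigr)^{a}\bigr)$ on the annulus; since $a\geq2$ the glued function is $C^{1}$, the same computation gives $-\Delta\psi(\underline w)\leq\la\rho\,g(\psi(\underline w))$ for $\la$ large (on the flat part both sides vanish, since $g(L)=0$), and extending by zero outside $B_{R}(x_{0})$ and comparing with $v$ yields the positive-measure flat zone and hence the contradiction, with no regularity or symmetry assumptions on $u$. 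This would in fact shortcut the paper's ODE construction, but as written your proof does not close.
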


As a consequence of theorems \ref{t1}, \ref{t2}, \ref{t2+}, \ref{nnhl1} and \ref{nhnl1}, we have a complete picture for the model example \rife{mod}:
\begin{itemize}
\item[i)] if $\ga \geq 2$, \rife{mod} has a solution in $\huz \cap \elle\infty$ for every $g$ in $\elle1$, $g \geq 0$;

\item[ii)] if $g = \la\,f$, with $f$ in $\elle{p}$, $p > \frac{N}{2}$, \rife{mod} has a solution for every $\la < \La_{f}$;

\item[iii)] if $g = \la\,f$, with $f$ in $\elle{p}$, $p > \frac{N}{2}$, and $0 < \ga < 2$, then $\La_{f} < +\infty$, and \rife{mod} has no solutions for $\la > \La_{f}$.
\end{itemize}

\medskip

The plan of the paper is as follows. In Section~2 we will prove Theorem \ref{t1} by 
approximating \rife{main} with a sequence of nonsingular problems, and in Section~3 we will prove 
Theorem 
\ref{t2} and Theorem \ref{t2+}. In Section~4 we will deal with nonexistence results, proving theorems 
\ref{nnhl1} and \ref{nhnl1}. In order to prove this latter result, we will transform \rife{mainla} into a 
semilinear problem which has solutions for every $\la > 0$, and prove (using one-dimensional 
analysis and super- and sub-solution techniques) that if $\la$ is large enough these 
solutions have ``flat'' zones of nonzero measure which correspond to zones where $u \equiv \si$. 
The final Section~5 will be devoted to the study of the asymptotic behaviour of sequences of
solutions of approximating problems if the limit problem \rife{main} has no solution, while in Section~6 we will study the case $\la = \Lambda_{f}$.

\medskip

{\bf Notation.} We will use the following notation throughout the paper: if $k > 0$ we define 
$$
T_k (s)= \max ( -k, \min ( s, k))\,,
\quad
G_k (s)=s-T_k (s)\,,
$$
and by $\eps_n$ we indicate any quantity that tends to $0$ as $n$ tends to infinity.

\section{Proof of Theorem \ref{t1}}

Our approach to prove the existence of a solution of \rife{main} is by approximation. We will consider the sequence $\{u_{n}\}$ of solutions of 
\be\label{appn}
\bc
-\dive ( M(x)\D u_n) + {h_n}(u_n)|\D u_n|^2= g_n & \mbox{in $\Omega$,}\\
\hfill u_n=0 \hfill & \mbox{on $\partial\Omega$,}
\ec
\ee
where $g_n =T_n (g)$, and $h_n$ is defined as
\be \label{hn}
h_n (s)= \begin{cases}
\hfill 0 \hfill & \mbox{if $s < 0$,} \\
\hfill n h(\frac1n)s \hfill & \mbox{if $0 \leq s < \frac1n$,} \\
\hfill h(s) \hfill & \mbox{if $\frac1n \leq s < \si$ and $h(s) \leq n$,} \\
\hfill n \hfill & \mbox{if $0 \leq s < \si$ and $h(s) > n$ or if $s \geq \si$.}
\end{cases}
\ee

\begin{center}
\begin{tikzpicture}[scale=0.7,>=triangle 45]
\draw[->] (-0.5,0) -- (5.5,0) node[below]{$s$};
\draw[->] (0,-0.2) -- (0,5.8) node[right]{$h_{n}(s)$};
\draw[dashed] (4.5,0) node[below]{$\sigma$} -- (4.5,5.8);
\draw (0,0.7) -- (0.5,0.7/0.9) -- (1,0.7/0.8) -- (1.5,0.7/0.7) -- (2,0.7/0.6) -- (2.5,0.7/0.5) -- (3,0.7/0.4) -- (3.25,0.7/0.35) -- (3.5,0.7/0.3) -- (3.75,0.7/0.25) -- (4,0.7/0.2) -- (4.25,0.7/0.15) -- (4.375,0.7/0.125);
\draw[very thick] (-0.8,0) -- (0,0) -- (0.5,0.7/0.9) -- (1,0.7/0.8) -- (1.5,0.7/0.7) -- (2,0.7/0.6) -- (2.5,0.7/0.5) -- (3,0.7/0.4) -- (3.25,0.7/0.35) -- (3.5,0.7/0.3) -- (3.75,0.7/0.25) -- (4,0.7/0.2) -- (5.5,0.7/0.2);
\draw[dashed] (0,3.5) node[left]{$n$} -- (4,3.5); 
\draw[dashed] (0.5,0) node[below]{$\frac1n$} -- (0.5,0.7/0.9); 
\end{tikzpicture}
\end{center}

\begin{remark}\label{h00}\rm
The definition of $h_{n}(s)$ in $[0,\frac1n)$ is needed to have that $h_{n}(s)\,s \geq 0$ for every $s$ in $\re$, and that $h_{n}(s)$ is continuous at~0. If $h(0) = 0$, there is no need to define $h_{n}(s)$ as above on $[0,\frac1n)$: it is enough to take $h_{n}(s) = h(s)$ on this set.
\end{remark}

Since $h_{n}(s)\,s \geq 0$, and $h_{n}$ is bounded, by a result of \cite{bbm} there exists a solution $u_{n}$ of \rife{appn}, i.e., a function $u_{n}$ in $\huz$ such that $h_n(u_n){|\nabla u_n |^2}$ belongs to $\elle1$, and such that 
\be\label{weakn}
\io M(x)\D u_n \cdot \D \vp +\io h_n(u_n){|\nabla u_n |^2} \vp = \io g_n\, \vp\,,
\ee
for every $\vp\in \huz\cap\elle{\infty}$. Moreover, since $g_{n}$ belongs to $\elle\infty$, and since $h_{n}(s) \, s \geq 0$, we have that $u_{n}$ belongs to $\elle\infty$.

In order to prove Theorem \ref{t1}, we begin by proving some properties of the sequence $\{u_{n}\}$.

\begin{prop} \label{prop}\sl
Consider the sequence $\{u_{n}\}$ of solutions of \rife{appn} with $g\in \elle1$, $g \geq 0$. Then:
\begin{enumerate}
\item[i)] $\{u_n\}$ is nonnegative and bounded in $\huz$; consequently, it weakly converges (up to subsequences) to some function $u$ in $\huz$;
\item[ii)] 
\be\label{stimint}
\io h_n (u_n) |\D u_n |^2 \leq \| g\|_{\elle1}\,;
\ee
\item[iii)] $0 \leq u \leq \si$, almost everywhere in $\Omega$;
\item[iv)] for every $k$ in $(0,\si)$, $T_k (u_n)$ strongly converges to $T_k (u)$ in $\huz$;
\item[v)] $u_n$ strongly converges to $u$ in $\huz$;
\item[vi)] If $\mis{(\{u = \si\})} = 0$, then $h(u_{n})|\D u_{n}|^{2}$ converges to $h(u)|\D u|^{2}$ almost everywhere in $\Omega$.
\end{enumerate}
\end{prop}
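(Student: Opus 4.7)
\emph{Items (i) and (ii).} For the nonnegativity in (i), I would test \rife{appn} with $-u_n^-$: since $h_n\equiv 0$ on $(-\infty,0]$ the quadratic lower-order term vanishes, and together with $g_n\geq 0$ the ellipticity forces $u_n^-\equiv 0$. For (ii) I would use the admissible test function $T_k(u_n)/k$; dropping the nonnegative elliptic contribution yields $\io h_n(u_n)|\D u_n|^2\,T_k(u_n)/k\leq\|g\|_{\elle1}$, and passing to $k\to 0^+$ by dominated convergence (using $T_k(u_n)/k\to \chi_{\{u_n>0\}}$ and $\D u_n=0$ a.e.\ on $\{u_n=0\}$) gives \rife{stimint}. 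For the $H^1_0$ bound, fix any $s_0\in(0,\sigma)$: Stampacchia (testing with $T_{s_0}(u_n)$) bounds $\io|\D T_{s_0}(u_n)|^2$, while the monotonicity of $h$ and \rife{stimint} control the tail $\int_{\{u_n>s_0\}}|\D u_n|^2\leq\|g\|_{\elle1}/h(s_0)$; summing and applying Rellich produces a weak limit $u\in\huz$.

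\emph{Item (iii).} Setting $\Phi_n(s)=\int_0^s\sqrt{h_n(t)}\,dt$, one has $\io|\D\Phi_n(u_n)|^2=\io h_n(u_n)|\D u_n|^2\leq\|g\|_{\elle1}$ by (ii), so by Poincar\'e $\Phi_n(u_n)$ is bounded in $\elle2$. Since $h_n\equiv n$ on $[\sigma,+\infty)$, on $\{u_n\geq\sigma+\delta\}$ we have $\Phi_n(u_n)\geq\delta\sqrt{n}$; thus $\mis\{u>\sigma\}>0$ would, via the a.e.\ convergence $u_n\to u$, force $\Phi_n(u_n)\to+\infty$ on a set of positive measure, contradicting the $\elle2$ bound. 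Hence $u\leq\sigma$ a.e.

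\emph{Items (iv), (v), (vi).} The strong truncation convergence (iv) is the heart of the proposition and the main obstacle, because of the quadratic growth of the lower-order term. Fix $k\in(0,\sigma)$; on $\{u_n\leq k\}$ one has $h_n(u_n)\leq h(k)$ for $n$ large. Setting $w_n=T_k(u_n)-T_k(u)$, I would test \rife{appn} with $\psi(w_n)$ for the odd function $\psi(s)=s\,{\rm e}^{\gamma s^2}$ with $\gamma=(h(k)/(2\alpha))^2$, chosen so that $\alpha\psi'(s)-h(k)|\psi(s)|\geq \alpha/2$: this is the Boccardo--Murat exponential trick that kills the quadratic term. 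Expanding $\D T_k(u_n)=\D w_n+\D T_k(u)$ on $\{u_n\leq k\}$, the coercive part absorbs the quadratic one and leaves a dominant $\tfrac\alpha 2\int_{\{u_n\leq k\}}|\D w_n|^2$; on $\{u_n>k\}$ one has $w_n\geq 0$ and $\D T_k(u_n)=0$, so the quadratic term there is nonnegative (and can be dropped) and the residual elliptic contribution $-\int_{\{u_n>k\}}M\D u_n\cdot \D T_k(u)\psi'(w_n)$ vanishes in the limit, using $\D T_k(u)=\D u\,\chi_{\{u<k\}}$ and $\chi_{\{u_n>k\}\cap\{u<k\}}\to 0$ a.e.; meanwhile $\int g_n\psi(w_n)\to 0$ by dominated convergence (since $\psi(w_n)$ is uniformly bounded and tends to $0$ a.e.), yielding (iv). Item (v) follows by splitting $u_n-u=(T_k(u_n)-T_k(u))+(G_k(u_n)-G_k(u))$: \rife{stimint} and monotonicity of $h$ give the uniform tail bound $\io|\D G_k(u_n)|^2\leq\|g\|_{\elle1}/h(k)\to 0$ as $k\to\sigma^-$ by \rife{h2}, while $G_k(u)\to 0$ in $\huz$ by dominated convergence using (iii), and combining with (iv) yields strong convergence in $\huz$. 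Finally for (vi), strong convergence in (v) gives $\D u_n\to \D u$ a.e.\ along a subsequence, and $\mis\{u=\sigma\}=0$ ensures $u<\sigma$ a.e., where $h$ is continuous and $h_n(u_n)=h(u_n)$ for $n$ large; hence $h_n(u_n)|\D u_n|^2\to h(u)|\D u|^2$ a.e.
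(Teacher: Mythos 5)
Your items i), ii) and iv) follow essentially the paper's own path: the nonnegativity and the estimate \rife{stimint} come from the standard truncation tests (the paper uses $\frac1\eps T_\eps(G_k(u_n))$, which for $k=0$ is your $T_k(u_n)/k$, and in passing also records the localized inequality on $\{u_n\ge k\}$ that it reuses later), and your exponential test $\psi(T_k(u_n)-T_k(u))$ with $\alpha\psi'-h(k)|\psi|\ge\alpha/2$ is exactly the paper's Bensoussan--Boccardo--Murat device, with the same treatment of the residual terms by weak--strong pairing. For iii) and v) you take a genuinely different, and correct, route: the paper proves iii) from $\int_{\{u_n\ge\si\}}|\D u_n|^2\le\frac1n\io g$ together with the a.e.\ convergence of $\D u_n$, which it obtains from the Boccardo--Murat theorem because $g_n-h_n(u_n)|\D u_n|^2$ is bounded in $\elle1$, and it proves v) by equiintegrability of $|\D u_n|^2$ plus Vitali; your $\Phi_n$/Poincar\'e argument for iii) (the device the paper itself uses later in the proof of Theorem \ref{t1}) and your splitting $u_n-u=(T_k(u_n)-T_k(u))+(G_k(u_n)-G_k(u))$ for v) avoid the a.e.\ gradient convergence altogether, which is a real simplification; just make explicit that $\D G_k(u)\to0$ a.e.\ as $k\to\si^-$ because $\D u=0$ a.e.\ on the level set $\{u=\si\}$, and that the tail bound uses $h_n(k)=h(k)$ only for $n$ large.

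The one genuine gap is in vi), and it is precisely the point the paper's proof of vi) is devoted to. Your justification ``$h_n(u_n)=h(u_n)$ for $n$ large'' is valid only where $u>0$: on the set $\{u=0\}$, which may well have positive measure, one can have $u_n(x)<\frac1n$ along the sequence, in which case $h_n(u_n(x))=n\,h(\tfrac1n)\,u_n(x)\neq h(u_n(x))$, and if $h(0)>0$ there is no reason for $h_n(u_n(x))$ to converge to $h(u(x))=h(0)$. The conclusion nevertheless holds, but you must argue as the paper does: by Stampacchia's theorem $\D u=0$ a.e.\ on $\{u=0\}$, and since $h_n(u_n(x))$ remains bounded there (it is dominated by $h(\de)$ once $u_n(x)\le\de$, $\tfrac1n\le\de$) while $|\D u_n(x)|^2\to|\D u(x)|^2=0$, the product tends to $0=h(u)|\D u|^2$ on $\{u=0\}$; on $\{0<u<\si\}$ your argument applies verbatim. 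A minor further remark: your a.e.\ convergence of $\D u_n$ is extracted from v) only along a further subsequence, whereas the paper has it for the selected subsequence already via Boccardo--Murat; this is harmless for the use in Lemma \ref{cpt}, but worth stating.
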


\begin{proof}[{\sl Proof}]  i) First of all, observe that since $g_{n} \geq 0$, the fact that $h_{n}(s)\,s \geq 0$ implies, by standard arguments, that $u_{n} \geq 0$ a.e.\ in $\Omega$.

Choosing $\vp =T_\si (u_n)$ as test function in \rife{weakn} (as in \cite{bg3}) we obtain
$$
\io M(x) \D u_n\cdot \D T_\si (u_n) + \io h_n(u_n)|\D u_n|^2\, T_\si (u_n)= \io
T_\si (u_n) g_n \,.
$$
Using \rife{a1}, the fact that $g_n\leq g$, and since both $T_{\si}(u_{n}) = \si$ 
and $h_n (u_n)=n$ on the set $\{u_{n} \geq \si\}$, we have
$$
\al \int_{\{u_n\leq \si\}}| \D u_n |^2 + n\si \int_{\{u_n\geq \si\}} |\D u_n|^2 \leq \si \|g\|_{\elle1}\,,
$$
which implies that
$\{u_n\}$ is bounded in $H^1_0 (\Omega)$.
Therefore, there exists $u$ in $H^1_0 (\Omega)$ such that (up to subsequences) $u_n$ converges to $u$ weakly in $H^1_0(\Omega)$.

\medskip

ii) Let $\eps>0$, let $0\leq k<\si$, and choose $\vp = {\frac{1}{\eps}}T_\eps (G_k (u_n))$ as test function in \rife{weakn}. We obtain
$$
\begin{array}{l}
\dys
{\frac{1}{\eps}} \io M(x) \D u_n\cdot \D T_\eps (G_k (u_n) )
\\
\dys
\quad
+ {\frac{1}{\eps}}\io h_n(u_n)|\D u_n|^2 T_\eps (G_k (u_n))
= \io {\frac{1}{\eps}}T_\eps (G_k (u_n)) g_n\,.
\end{array}
$$
Dropping the first (nonnegative) term we have, since $T_{\eps}(G_{k}(u_{n})) = \eps$ where $u_{n} \geq k+\eps$, and since $0 \leq g_{n} \leq g$,
$$
\dys \int_{\{ u_n \geq k+\eps\} } h_n(u_n)|\D u_n|^2 \leq \int_{\{u_n \geq k\}} g_{n} \leq \int_{\{u_n \geq k\}} g \,.
$$
Taking the limit as $\eps$ tends to $0^{+}$, we have 
\be\label{diseg}
\dys \int_{\{ u_n \geq k\} } h_n(u_n)|\D u_n|^2 \leq \int_{\{u_n \geq k\}} g\,,
\ee
which then gives \rife{stimint} taking $k = 0$. Since
$$
-\dive(M(x)\D u_{n}) = g_{n} - h_{n}(u_{n})|\D u_{n}|^{2}\,,
$$
and the right hand side is bounded in $\elle1$ as a consequence of \rife{stimint} and of the assumptions on $g$, we obtain from a result of \cite{BM} that (up to subsequences) $\D u_{n}$ converges to $\D u$ almost everywhere in $\Omega$.

\medskip

iii) From \rife{diseg}, and the fact that $h_{n}$ is increasing, we deduce
\be\label{1}
\dys \int_{\{u_n \geq k\} }|\D u_n|^2 \leq \frac{1}{h_n(k)}\,\io g\,.
\ee
Choosing $k=\si$ we have
\be\label{verysmall}
\dys \int_{\{ u_n \geq \si\} }|\D u_n|^2 \leq \frac{1}{ n}\, \io g\,. 
\ee
Letting $n$ tend to infinity, and using Fatou lemma together with the almost everywhere convergence of $\D u_{n}$, we have
$$
\int_{\{ u > \si\} }|\D u|^2 = 0,
$$
which implies $0 \leq u \leq \si$ almost everywhere in $\Omega$.

\medskip

iv) Let $0 < k < \si$, and
$$
\vp_\eta (s) = se^{\eta s^2}\,,\quad \eta >0\,.
$$
The function $\vp_{\eta}$ has the following property:
\be\label{vpla}
\alpha \vp_{\eta}'(s) - \frac{\beta h (k)}{\alpha} |\vp_{\eta}(s)| \geq \frac{\alpha}{2}\,,
\quad
\forall \eta >\frac{\beta^2 h^2(k)}{4\alpha^4}\,,
\ 
\forall s \in \re
\,.
\ee
Hence, we fix $\eta >\frac{\beta^2 h^2(k)}{4\alpha^4}$, and choose 
$\vp = \vp_{ \eta } (T_k (u_n)-T_k (u))$ as test function in \rife{weakn}. 
We obtain (for the sake of brevity, we omit the arguments from $\vp_{\eta}$ and $\vp'_{\eta}$)
\be\label{truncates}
\begin{array}{l}
\dys
\io M(x)\D u _n \cdot \D (T_k (u_n)-T_k (u)) \vp_{\eta}'\\
\dys
\quad
+ \io h_n(u_n)|\D u_n|^2 \vp_{\eta} 
= \io
\vp_{\eta} g_{n}\,.
\end{array}
\ee
Since $u_n$ converges to $u $ a.e., and since $|T_k (u_n )-T_k (u)|\leq 2k $, we have
$$
\io
\vp_{\eta} (T_k (u_n)-T_k (u))g_{n} = \eps_n\,.
$$
Since $T_k (u_n) \to T_k (u)$ a.e. and weakly in $\huz$ and since $\vp_{\eta}'$ is bounded, it follows that 
$$
-\io M(x)\D T_k (u) \cdot \D (T_k (u_n)-T_k (u)) \vp_{\eta}' =\eps_n\,.
$$
Thus  adding such a quantity on both sides of \rife{truncates}, 
 dropping the nonnegative term 
$$
\int_{\{u_{n} > k\}}h_{n}(u_n)|\D u_n|^2 \vp_{\eta}\,,
$$
and using \rife{a1}, we have (recall that $u_{n} = T_{k}(u_{n}) + G_{k}(u_{n})$)
$$
\begin{array}{l}
\dys
\io M(x) \D (T_k (u_n) - T_k ( u)) \cdot \D (T_k (u_n)-T_k (u)) \vp_{\eta}'
\\
\dys
\qquad
+ \int_{\{u_n\leq k\}} h_{n}(u_n)|\D u_n|^2 \vp_{\eta} 
\\
\dys
\quad
\leq
\eps_n
+
\beta \io |\D G_k (u_n)| | \D T_k (u)| |\vp_{\eta}' (k-T_k (u))|\,.
\end{array}
$$
We note that, since $h_{n} \leq h$, since $h_{n}$ is increasing, and by \rife{a1},
$$
\left| \int_{\{u_n\leq k\}} h_{n}(u_n)|\D u_n|^2 \vp_{\eta} \right|
\leq 
\frac{h(k)}{\alpha} \io M(x) \D T_k (u_n) \cdot \D T_k (u_n)| \vp_{\eta}|\,.
$$ 
Since we have (by \rife{a1})
$$
\begin{array}{l}
\dys
\io M(x) \D T_k (u_n) \cdot \D T_k (u_n)| \vp_{\eta} |
\\
\dys
\quad
=
\io M(x) \D (T_k (u_n)-T_k (u)) \cdot \D (T_k (u_n) - T_k (u))| \vp_{\eta} |
+\eps_n
\\
\dys
\quad
\leq
\beta \io |\D (T_k (u_n)-T_k (u))|^{2}|\vp_{\eta}| + \eps_n
\,,
\end{array}
$$
and
$$
\io |\D G_k (u_n)| | \D T_k (u)| |\vp_{\eta}' (k-T_k (u))| =\eps_n\,,
$$
we conclude that, using \rife{a1},
$$
\io |\D (T_k (u_n) - T_k ( u))|^2 \left[ \alpha \vp_{\eta}' - \frac{\beta h (k)}{\alpha} |\vp_{\eta}|\right]
\leq 
\eps_n \,.
$$
Using \rife{vpla}, we therefore deduce that 
$$
\frac{\alpha}{2} \io |\D (T_k (u_n) - T_k ( u))|^2 
\leq 
\eps_n\,,
$$
and thus $T_k (u_n)$ strongly converges to $T_k (u)$ in $H^1_0 (\Omega)$.

\medskip

v) Since $\D u_n$ converges to $\D u$ almost everywhere in $\Omega$, it is enough to prove that $\{|\D u_n |^2\}$ is equiintegrable. Let
$E\subset \Omega$ be measurable, and let $\eps>0$. Using \rife{1}, we have, for $0 < k < \si$,
$$
\int_{E} |\D G_{k}(u_{n})|^{2}
\leq
\io |\D G_{k}(u_{n})|^{2}
=
\int_{\{u_{n} \geq k\}} |\D u_{n}|^{2}
\leq
\frac{1}{h_{n}(k)}\io g\,.
$$
By the assumptions on $h$, there exist $k_{\eps} > 0$ and $n_{\eps} > 0$ such that for every $n\geq n_{\eps}$ we have
$$
\int_{E} |\D G_{k_{\eps}}(u_{n})|^{2} \leq \frac{\eps}{2}\,. 
$$
Once $k_{\eps}$ is fixed, by iv) and by Vitali convergence theorem we have 
$$
\dys \int_E |\D T_{k_{\eps}} (u_n) |^2 \leq \frac{\eps}{2}\,, \ \ \forall\ n\in \na\,,
$$
if the measure of $E$ is small enough. This concludes the proof of v), since $u_{n}=T_{k_{\eps}} (u_{n})+G_{k_{\eps}} (u_{n})$.

\medskip

vi) Even if we already know that both $u_{n}$ and $\D u_{n}$ converge almost everywhere 
in $\Omega$, the fact that $h(0)$ can be strictly positive means that we have to be careful when dealing with the set $\{u=0\}$. Anyway, as we are going to show, the presence of the quadratic gradient term will allow us to conclude using a result by G. Stampacchia (see \cite{st2}).

\medskip

Define $\Omega' = \{x \in \Omega : u(x) < \si\}$; by assumption, and by iii), we have $\mis(\Omega) = \mis(\Omega')$. Let
$$
E_u = \{x\in\Omega': u_n(x)\not\rightarrow u(x)\}\,,
\ \ 
E_{\nabla u} = \{x\in\Omega': \nabla u_n(x)\not\rightarrow \nabla u(x)\}\,,
$$
so that $\mis(E_u \cup E_{\nabla u})=0$. Thus, if we define $\Omega'' = \Omega'\backslash (E_u \cup E_{\nabla u})$, we have that $\mis(\Omega'') = \mis(\Omega)$. 

Define now
$$
F_+ = \{x\in\Omega'': u(x)>0\}\,,
\quad
F_0= \{x\in\Omega'': u(x)=0, \nabla u(x)=0\}\,.
$$
Since, by a result by G. Stampacchia, we have that $\D u = 0$ almost everywhere on the set $\{u = 0\}$, we have
$$
\mis(F_+ \cup F_0) = \mis(\Omega'') = \mis(\Omega)\,.
$$
Now, if $x\in F_+$, then $h_n (u_n(x))|\D u_n(x)|^2$ tends to 
$h (u (x))|\D u(x)|^2$, while if $x\in F_0$, then, since $h_n (u_n(x))$ is bounded and $\nabla u_n (x)$ converges to $\nabla u(x) = 0$, we have 
$$
\lim_{n \to +\infty}\,h_n(u_n(x))|\nabla u_n (x)|^2 = 0 = h(u(x))|\nabla u (x)|^2\,. 
$$ 
Therefore, $h_n(u_n(x))|\nabla u_n (x)|^2$ converges to $h(u(x))|\nabla u (x)|^2$ in $F_{+} \cup F_{0}$, i.e., almost everywhere in $\Omega$.
\end{proof} 

Observe that the results of Proposition \ref{prop} are not enough to prove the existence of a solution for \rife{main}. Indeed, to pass to the limit in \rife{weakn} we need the strong compactness of the lower order term in $\elle1$. 
In order to prove this fact, we need an information about the measure of the set in which $u$ is close to $\si$.

\begin{lemma}\label{cpt}\sl
Let $\{u_{n}\}$ be a sequence of solutions of \rife{appn}, with $g \in \elle1$, $g \geq 0$, and suppose that 
\be\label{meas}
\bc
\hfill \forall \de>0\ \exists \tau > 0\,,\ \exists n_0>0:\hfill \\
\mis (\{ \si-\tau\leq u_n \leq \si+\tau\}) \leq \de\,,\quad \forall n\geq n_0 \,.
\ec
\ee
Then $h_n (u_n) |\D u_n|^2$ is strongly compact in $\elle1$.
\end{lemma}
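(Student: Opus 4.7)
The plan is to apply Vitali's convergence theorem to the sequence $\{h_n(u_n)|\D u_n|^2\}$: it is uniformly bounded in $\elle1$ by \rife{stimint}, so once I establish both pointwise a.e.\ convergence to $h(u)|\D u|^2$ and equiintegrability, strong $\elle1$ compactness follows.

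For the pointwise convergence I would first leverage hypothesis \rife{meas} to show that $\mis(\{u=\si\})=0$. Indeed, if $u(x)=\si$ and $u_n(x)\to u(x)$ (which holds a.e.\ by Proposition \ref{prop}), then for every $\tau>0$ the point $x$ eventually lies in $\{\si-\tau\leq u_n\leq\si+\tau\}$. Fatou's lemma applied to the indicator functions of these sets, combined with \rife{meas}, gives $\mis(\{u=\si\})\leq\de$ for every $\de>0$, hence $\mis(\{u=\si\})=0$. Proposition \ref{prop}(vi) then yields $h_n(u_n)|\D u_n|^2\to h(u)|\D u|^2$ a.e.\ in $\Omega$.

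For equiintegrability, fix $\eps>0$ and a measurable $E\subset\Omega$, and split
$$
\int_E h_n(u_n)|\D u_n|^2 = \int_{E\cap\{u_n\leq\si-\tau\}}h_n(u_n)|\D u_n|^2 + \int_{E\cap\{u_n>\si-\tau\}}h_n(u_n)|\D u_n|^2,
$$
for a small $\tau>0$ to be determined. On $\{u_n\leq\si-\tau\}$, the monotonicity of $h$ and the construction of $h_n$ give $h_n(u_n)\leq h(\si-\tau)$ once $n\geq\max\{h(\si-\tau),1/(\si-\tau)\}$, so this first piece is bounded by $h(\si-\tau)\int_E|\D u_n|^2$; since $u_n\to u$ strongly in $\huz$ by Proposition \ref{prop}(v), $\{|\D u_n|^2\}$ converges in $\elle1$ and is therefore equiintegrable, so this contribution is less than $\eps/2$ provided $|E|$ is small. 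For the second piece, estimate \rife{diseg} with $k=\si-\tau$ gives
$$
\int_{E\cap\{u_n\geq\si-\tau\}}h_n(u_n)|\D u_n|^2 \leq \int_{\{u_n\geq\si-\tau\}}g,
$$
independently of $E$. Writing $\{u_n\geq\si-\tau\}\subset\{\si-\tau\leq u_n\leq\si+\tau\}\cup\{u_n>\si+\tau\}$, the first set has measure at most $\de$ by \rife{meas}, while the second is controlled via \rife{verysmall}: Poincar\'e's inequality applied to $(u_n-\si)^+\in\huz$ yields $(u_n-\si)^+\to 0$ in $\elle2$, so Chebyshev's inequality gives $\mis(\{u_n>\si+\tau\})\to 0$ as $n\to\infty$. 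Since $g\in\elle1$ is absolutely continuous as a measure, choosing $\de$ (and hence $\tau$) sufficiently small bounds the second piece by $\eps/2$ for $n$ large. Vitali's theorem then concludes.

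The main obstacle I anticipate is the treatment of the ``overshoot'' set $\{u_n>\si+\tau\}$: hypothesis \rife{meas} only constrains the symmetric band around $\si$, not the region where $u_n$ exceeds $\si$, and smallness of this overshoot must therefore be extracted from the a priori estimate \rife{verysmall} rather than from \rife{meas} itself. The finitely many initial indices $n<n_0$ pose no trouble, since each $h_n(u_n)|\D u_n|^2$ is in $\elle1$ and hence individually equiintegrable.
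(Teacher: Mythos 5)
Your proof is correct and follows essentially the same route as the paper: Vitali's theorem, almost everywhere convergence obtained from the Fatou argument giving $\mis(\{u=\si\})=0$ together with vi) of Proposition \ref{prop}, and equiintegrability by splitting at a level near $\si$ and combining \rife{diseg} with \rife{meas} and the absolute continuity of the integral of $g$. The only cosmetic differences are that you bound the contribution on $\{u_n\le\si-\tau\}$ through the strong $\huz$ convergence of v) instead of the compactness of the truncates in iv), and you control the overshoot set $\{u_n>\si+\tau\}$ by Poincar\'e and Chebyshev via \rife{verysmall}, where the paper simply observes that $\int_{\{u_n>\si\}}g\to 0$; both variants are sound.
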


\begin{proof}[{\sl Proof}]  We already know, by Proposition \ref{prop}, that, up to subsequences, $u_{n}$ almost everywhere converges in $\Omega$ to some function $u$, with $u \leq \si$ almost everywhere. We begin to prove that, under assumption \rife{meas}, $\mis{(\{u = \si\})} = 0$. Indeed, we have
$$
\liminf_{n \to +\infty}\,\chi_{\{\si-\tau \leq u_{n} \leq \si+\tau\}}(x)
\geq
\chi_{\{\si-\tau < u < \si+\tau\}}(x)\,,
$$
almost everywhere in $\Omega$. This, Fatou Lemma and \rife{meas} imply
$$
0 \leq \mis (\{u = \si\}) \leq \mis{(\{\si-\tau < u < \si+\tau\})} \leq \de\,,
$$
and thus $\mis{(\{u = \si\})} = 0$. This fact, and vi) of Proposition \ref{prop} imply that $h_n (u_n) |\D u_n|^2$ almost everywhere convergest to $h(u) |\D u|^2$ in $\Omega$. Thus, to apply Vitali theorem, we only need to prove that the sequence $\{h_n (u_n)|\D u_n|^2 \}$ is equiintegrable. For every $E\subset \Omega$ measurable and for every $0<k<\si$ we have, since $h_{n}$ is increasing, and $h_{n}(s) \leq h(s)$ if $s > \frac 1n$, 
$$
\begin{array}{r@{\hspace{2pt}}c@{\hspace{2pt}}l}
\dys 
\int_E 
h_n (u_n)|\D u_n|^2 
& = &
\dys
\int_{E\cap\{u_n \leq k\}} h_n (u_n)|\D u_n|^2 +
\int_{E\cap\{u_n \geq k\}} h_n (u_n)|\D u_n|^2 
\\
& \leq &
\dys 
h(k)\int_{E} |\D T_k (u_n) |^2 
+ 
\int_{\{u_n \geq k\}} h_n (u_n)|\D u_n|^2 \,.
\end{array}
$$
Using \rife{diseg}, we have that 
$$
\int_{\{u_n \geq k\}} h_n (u_n)|\D u_n|^2 
\leq
\int_{\{k\leq u_n \leq \sigma \}} g
+ \int_{\{ u_n > \sigma \}} g\,,
$$
and the last integral tends to zero as $n$ tends to infinity since $u \leq \si$ almost everywhere in $\Omega$ (by iii) of Proposition \ref{prop}). 
Let now $\eps>0$; since $g$ belongs to $\elle1$, there exists $\de_{\eps} > 0$ such that
$$
\mis(E) < \de_{\eps} \ \Rightarrow\ \int_{E}\,g < \frac{\eps}{2}\,.
$$
Let $k_{\eps}<\si$ be such that \rife{meas} holds true with $\de_{\eps}$; therefore,
$$
\int_{\{k_{\eps} \leq u_{n} \leq \si\}}\,g < \frac{\eps}{2}\,,
\quad
\forall n \geq n_{0}\,.
$$
Once $k_{\eps} < \si$ is fixed, we have that $T_{k_{\eps}} (u_n)$ is strongly compact in $\huz$ by iv) of Proposition \ref{prop}; therefore, we can choose $\mis(E)$ small enough so that 
$$
h (k_{\eps})\int_{E} |\D T_{k_{\eps}} (u_n) |^2 \leq \frac{\eps}{2}\,,
$$
uniformly with respect to $n$. By applying Vitali theorem the conclusion then follows. \end{proof} 

We can now prove Theorem \ref{t1}. In view of Lemma \ref{cpt}, we are going to prove that if $\sqrt{h}$ does not belong to $L^{1}((0,\si))$, then \rife{meas} holds true.

\begin{proof}[{\sl Proof of Theorem \ref{t1}}] Thanks to Proposition \ref{prop} we have
\be\label{inizio}
\io\,h_n (u_n)|\D u_n |^2
\leq
\|g\|_{\elle1}\,.
\ee
Defining $\Phi_n(s)=\int_0^s \sqrt{h_n(t) } dt $,
we can write the above inequality as
$$
\io |\D \Phi_n (u_n) |^2
\leq
\|g\|_{\elle1}\,.
$$
From Poincar\'e inequality we then deduce, for every $\tau > 0$, and for some $C > 0$,
$$
C \int_{\{\si-\tau \leq u_n\leq \si+\tau \} } | \Phi_{n} (u_n) |^2
\leq
C \io | \Phi_{n} (u_n) |^2
\leq \|g\|_{\elle1}\,.
$$
Thus, since $\Phi_{n}$ is increasing, we have 
$$
\mis (\{\si-\tau \leq u_n\leq \si+\tau \}) \leq
\frac{\|g\|_{\elle1}}{C| \Phi_n (\si-\tau) |^2}
\,.
$$
If $n$ is large enough, then $h_{n}(s) = h(s)$ on $[0,\si-\tau]$, so that
$$
\mis (\{\si-\tau \leq u_n\leq \si+\tau \}) \leq
\frac{\|g\|_{\elle1}}{C| \Phi (\si-\tau) |^2}
\,,
$$
where $\Phi(t) = \int_{0}^{t}\,\sqrt{h(s)}\,ds$. Since $\Phi$ is unbounded on $[0,\si)$ by \rife{rad}, \rife{meas} follows from the above inequality.
\end{proof}

\begin{remark}\label{abs} \rm 
For the sake of simplicity, we chose to present the existence 
result of Theorem \ref{t1} for problem \rife{main}, even if several generalizations could be possible. For instance, in the proof we have never used the linearity of the principal part with respect to the gradient, so that it is easy to see that there exists a solution for 
$$
\bc
A(u) + H(x,u,\D u) = g & \mbox{in}\,\Omega\,\\
\hfill u=0 \hfill & \mbox{on $\partial\Omega$,}
\ec
$$
where $A(u) = -\dive (a(x,\D u))$ is a pseudomonotone operator (see \cite{ll} for more details), and $H$ is such that $H(x,s,0) \equiv 0$, and 
$$
h_1(s)|\xi|^2 \leq H(x,s,\xi) \leq h_2 (s)|\xi|^2\,,
$$
with $h_1$ and $h_{2}$ continuous, increasing functions such that \rife{h1}, \rife{h2}, and \rife{rad} hold true.
\end{remark}

\section{Proof of theorems \ref{t2} and \ref{t2+}}

Let $\{u_{n}\}$ be the sequence of solutions of 
\be\label{app}
\bc
-\dive ( M(x)\D u_n) + {h_n}(u_n)|\D u_n|^2= \la f\quad & \mbox{in $\Omega$,}\\
\hfill u_n=0\hfill & \mbox{on $\partial\Omega$,}
\ec
\ee
where
$
h_n (s)$ has been defined in \rife{hn}. Such solutions exist by a result in \cite{bbm} and are, thanks to the assumptions on $f$,  in $\huz \cap \elle\infty$ for every fixed $n$. In order to prove Theorem \ref{t2}, we will use the   summability  assumption on $f$ to prove that the sequence $\{u_{n}\}$ of solutions of \rife{app} is uniformly bounded in $\elle\infty$ by a constant (depending on $\la$) which can be made strictly smaller than $\si$ by choosing $\la$ small. So that, roughly speaking, we deal with an equation which is no longer singular.
This will allow us to apply Lemma \ref{cpt} to pass to the limit in \rife{app}.

\begin{proof}[{\sl Proof of Theorem \ref{t2}}]
Let $k > 0$, and choose $G_{k}(u_{n})$ as test function in \rife{app}. Using the fact that $u_{n} \geq 0$, and that $h_{n}(s)$ is nonnegative, we have
$$
\io M(x)\D u_{n} \cdot \D G_{k}(u_{n}) \leq \lambda\io f\,G_{k}(u_{n}).
$$ 
Then we use \rife{a1} to deduce, thanks to a classical result by G. Stampacchia (see \cite{st}), that there exists a constant 
$C_0>0$ (depending only on $\Omega$, $\alpha$, $p$ and $N$) such that 
\be\label{lio}
\norma{u_n}{\elle\infty} \leq \lambda \, C_0\, \norma{f}{\elle p}\,.
\ee
Consequently, if $\la<\Lambda = \frac{\sigma}{\|f\|_{\elle{p}} C_0}$, \rife{lio} implies \rife{meas}, and so both Proposition \ref{prop} and Lemma \ref{cpt} can be applied, yielding the existence of a solution of \rife{mainla} for $\la$ small. Therefore, $\La_{f} > 0$. 
\end{proof}

\begin{proof}[{\sl Proof of Theorem \ref{t2+}}]
We recall that now $M(x) \equiv I$. We are going to prove that \rife{mainla} has a solution for every $\la < \La_{f}$. Let $0 < \la < \La_{f}$, and let $\mu$ in $(\la,\La_{f})$ be such that \rife{mainla} has a solution $v$ such that $0 \leq v \leq \si-\eps$ for some $\eps > 0$. Define
$$
\overline{h}(s) = 
\begin{cases}
\hfill h(s) \hfill & \mbox{if $0 \leq s \leq \si - \eps$,} \\
\hfill h(\si-\eps) \hfill & \mbox{if $s > \si - \eps$.}
\end{cases}
$$
Since $\overline{h}$ is bounded and $f \geq 0$, by the results of \cite{bbm} there exists a nonnegative function $u$, solution in $\huz \cap \elle\infty$ of
$$
\begin{cases}
-\Delta u + \overline{h}(u)|\D u|^{2} = \la\,f & \mbox{in $\Omega$,}\\
\hfill u = 0 \hfill & \mbox{on $\partial\Omega$.}
\end{cases}
$$
Clearly, $v$ is a solution of the above problem with $\mu$ instead of $\la$. We now follow the lines of \cite{as}: let $k > 0$ be fixed, and choose ${\rm e}^{-\overline{H}(u)}\,(\psi(u)-\psi(v))^{+}$ as test function in the equation for $u$, and ${\rm e}^{-\overline{H}(v)}\,(\psi(u)-\psi(v))^{+}$ in the equation for $v$, where (as in \rife{hpsi} with $\alpha=1$)
$$
\overline{H}(s) = \int_{0}^{s}\,\overline{h}(t)\,dt\,,
\quad
\mbox{and}
\quad
\psi(s) = \int_{0}^{s}\,{\rm e}^{-\overline{H}(t)}\,dt.
$$
We obtain
\be\label{cancella}
\begin{array}{l}
\dys
\io \D u \cdot \D (\psi(u)-\psi(v))^{+}\,{\rm e}^{-\overline{H}(u)}
\\
\dys
\qquad
-
\io \overline{h}(u)|\D u|^{2} (\psi(u)-\psi(v))^{+}\,{\rm e}^{-\overline{H}(u)}
\\
\dys
\qquad
+
\io \overline{h}(u)|\D u|^{2} (\psi(u)-\psi(v))^{+}\,{\rm e}^{-\overline{H}(u)}
\\
\dys
\quad
=
\la\io f\,{\rm e}^{-\overline{H}(u)}\,(\psi(u)-\psi(v))^{+}\,,
\end{array}
\ee
which can be rewritten as
$$
\io \D \psi(u) \cdot \D(\psi(u)-\psi(v))^{+}
=
\la\io f\,{\rm e}^{-\overline{H}(u)}\,(\psi(u)-\psi(v))^{+}\,.
$$
Analogously, we obtain
$$
\io \D \psi(v) \cdot \D(\psi(u)-\psi(v))^{+}
=
\mu\io f\,{\rm e}^{-\overline{H}(v)}\,(\psi(u)-\psi(v))^{+}\,.
$$
Subtracting the two identities, and recalling that $\la < \mu$, we obtain
$$
\io |\D(\psi(u)-\psi(v))^{+}|^{2}
\leq
\la\io \,f(\psi(u)-\psi(v))^{+}({\rm e}^{-\overline{H}(u)}-{\rm e}^{-\overline{H}(v)}).
$$
Since $\overline{H}$ and $\psi$ are increasing, the right hand side is negative, and so
$$
\io |\D(\psi(u)-\psi(v))^{+}|^{2}
= 0\,,
$$
which implies $u \leq v$. Since $u \leq v \leq \si - \eps$, we have $\overline{h}(u) = h(u)$ and so $u$ is a solution of \rife{mainla}.

To prove uniqueness of solutions for fixed $\la$, let $u$ and $v$ be solutions of \rife{mainla} for the same $\la$. If $H$ and $\psi$ are as in \rife{hpsi} (with $\al = 1$), using ${\rm e}^{-H(u)}(\psi(u)-\psi(v))^{+}$ and ${\rm e}^{-H(v)}(\psi(u)-\psi(v))^{+}$ as test functions, and reasoning as above, one proves that $u \leq v$; exchanging the roles of $u$ and $v$ yields the reverse inequality, so that $u = v$. To prove that if $\la < \mu$, then $u_{\la} \leq u_{\mu}$, use ${\rm e}^{-H(u_{\la})}(\psi(u_{\la})-\psi(u_{\mu}))^{+}$ and ${\rm e}^{-H(u_{\mu})}(\psi(u_{\la})-\psi(u_{\mu}))^{+}$ as test functions, and reason as above.
\end{proof}

\begin{remark}\label{compar}\rm
We want to stress that the fact that $M(x) \equiv I$ has been used in \rife{cancella} in order to cancel two equal terms: this is the key ingredient in the proof of the comparison principle for solutions of \rife{mainla}. In the general case of a symmetric matrix $M$, comparison results are few and partial: we refer the reader to the papers \cite{as} and \cite{bamu}. 
\end{remark}

\section{Nonexistence of solutions}

In this section we are going to prove Theorem \ref{nnhl1} and Theorem \ref{nhnl1}. We recall that we are dealing with solutions of
\begin{equation}\label{mainlan}
\bc
-\dive ( M (x) \D u) + h(u)|\D u|^2 =\la\, f \quad& \mbox{in $\Omega$,}\\
\hfill u=0 \hfill & \mbox{on $\partial\Omega$.}
\ec
\end{equation}
We begin with the proof of Theorem \ref{nnhl1}, i.e., with the case of $h$ in $L^{1}((0,\si))$.

\begin{proof}[{\sl Proof of Theorem \ref{nnhl1}}] Suppose by contradiction that there exists a solution $u \in \huz$ such that $0\leq u<\sigma $ a.e., and choose ${\rm e}^{-H (u)}\vp_{1}(f)$ as test function in the weak formulation of \rife{mainlan}, where, as in \rife{hpsi},
$$
H(s) = \frac{1}{\alpha}\,\int_{0}^{s}\,h(t)\,dt\,.
$$
We obtain
\be \label{albac}
\begin{array}{l}
\dys
\io M(x)\D u \cdot \D \vp_{1}(f) {\rm e}^{-H (u)}
\\
\dys
\qquad
-
\io M(x) \D u \cdot \D u\, \frac{{\rm e}^{-H (u)}}{\alpha} h (u) \vp_{1}(f)
\\
\qquad
\dys
+ \io |\D u |^2 {\rm e}^{-H (u)} h (u) \vp_{1}(f) 
\\
\dys
\quad
=
\la \io f\, {\rm e}^{-H (u) } \vp_{1}(f) \,.
\end{array}
\ee
Thus, if we define (again as in \rife{hpsi})
$$
\psi (s)= \int^s_0 {\rm e}^{-H (t)}dt\,,
$$
using the definition of $\la_1(f)$ and $\vp_1(f)$, as well as the symmetry of $M$, we deduce that the first term in \rife{albac} can be written as 
$$
\begin{array}{r@{\hspace{2pt}}c@{\hspace{2pt}}l}
\dys
\io M (x) \D u \cdot \D \vp_{1}(f) {\rm e}^{-H (u)}
& = &
\dys
\io M (x) \D \vp_{1}(f) \cdot \D \psi(u)
\\
& = &
\dys
\la_1(f) \io f \vp_{1}(f) \psi(u)\,.
\end{array}
$$
Therefore, using \rife{a1} and dropping nonnegative terms, we have, from \rife{albac}, 
$$
\io f\, \vp_{1}(f) [\la_1(f) \psi(u) - \la {\rm e}^{-H (u) }]\geq 0
$$ 
Consider now the function $\Theta : [0, \si)\to \re^+$ defined by
$$
\Theta (s)=\la_1(f) \psi(s)- \la {\rm e}^{-H (s) } \,.
$$
Since both $\psi(0)=0$ and ${\rm e}^{-H (0) }=1$, we have $\Theta (0)= -\la <0$. 
Moreover,
$$
\Theta'(s) = \left(\la_1(f) + \la\frac{h(s)}{\alpha}\right)\,{\rm e}^{-H(s)} \geq 0\,,
$$
so that
$$
\Theta (s) \leq \Theta (\si) = \la_1(f)\psi(\si) -\la {\rm e}^{-H (\sigma) }
$$
Therefore, if $\la >\la_1(f) {\rm e}^{H(\sigma)}\psi(\si)$ we have 
$$
0\leq \io f \,\vp_{1}(f)[\la_1(f) \psi(u) - \la {\rm e}^{-H (u) }] < 0 \,,
$$
which is a contradiction.
\end{proof}

We turn now to the study of \rife{mainlan} under assumption \rife{assumpt2}, which implies that $\sqrt{h}$ belongs to $L^{1}((0,\si))$, while $h$ itself does not. Since $M \equiv I$, we are going to deal with solutions of
\be\label{noexis}
\bc
-\Delta u + h(u)|\D u|^{2} = \la\,f & \mbox{in $\Omega$,} \\
\hfill u = 0 \hfill & \mbox{on $\partial\Omega$.}
\ec
\ee
As in \rife{hpsi}, we define
$$
H(s) = \int_{0}^{s}\,h(t)\,dt\,,
\quad
\mbox{and}
\quad
\psi(s) = \int_{0}^{s}\,{\rm e}^{-H(t)}\,dt\,.
$$
By the assumptions on $h$, $H$ is unbounded on $(0,\si)$, while $\psi$ is bounded. We define $L = \psi(\si)$, so that $0 \leq \psi(s) \leq L$, and $\psi$ will be increasing (hence invertible) from $[0,\si]$ to $[0,L]$.

Let now $u\in\huz$ be a solution of \rife{noexis}, with $0 \leq u < \si$ almost everywhere in $\Omega$. Defining $v = \psi(u)$, we have
$$
\D v = {\rm e}^{-H(u)}\D u\,,
\quad
\Delta v = {\rm e}^{-H(u)}[\Delta u - h(u)|\D u|^{2}]\,.
$$
If we set
\be\label{defg}
g(s) = {\rm e}^{-H(\psi^{-1}(s))}\,,
\ee
we have that $v\in \huz$ is a solution of
\be\label{slin}
\bc
-\Delta v = \la\,f\,g(v) & \mbox{in $\Omega$,} \\
\hfill v = 0 \hfill & \mbox{on $\partial\Omega$.}
\ec
\ee

\begin{lemma}\label{ong}\sl
The function $g: [0,L] \to \re$ defined by \rife{defg} is such that:
\begin{itemize}
\item[i)] $g(0) = 1$ and $g(L) = 0$;

\item[ii)] $g$ is decreasing;

\item[iii)] $\dys\lim_{s \to L^{-}}\,g'(s) = -\infty$;

\item[iv)] 
$$
\int_{0}^{L}\,\frac{dt}{\sqrt{\int_{t}^{L}\,g(s)\,ds}} < +\infty\,.
$$
\end{itemize} 
\end{lemma}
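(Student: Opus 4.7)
Parts (i)--(iii) all reduce to the formula
$$g'(s) = -h(\psi^{-1}(s)),$$
which follows from the chain rule together with $(\psi^{-1})'(s) = 1/\psi'(\psi^{-1}(s)) = e^{H(\psi^{-1}(s))}$. Evaluating at the endpoints of $[0,L]$ gives (i): $g(0) = e^{-H(0)} = 1$, while $g(L) = e^{-H(\sigma)} = 0$ because under \rife{assumpt2} one has $\gamma \geq 1$, so $h \notin L^{1}((0,\sigma))$ and hence $H(\sigma) = +\infty$. Part (ii) is immediate from $g' \leq 0$, and (iii) follows from \rife{h2} together with $\psi^{-1}(s) \to \sigma^{-}$ as $s \to L^{-}$.

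For (iv), the plan is to undo the definition of $g$ via the substitution $s = \psi(r)$ (so $ds = e^{-H(r)}\,dr$ and $g(\psi(r)) = e^{-H(r)}$), which turns the inner integral into $\int_{t}^{L} g(s)\,ds = \int_{\psi^{-1}(t)}^{\sigma} e^{-2H(r)}\,dr =: A(\psi^{-1}(t))$, and then apply the further substitution $t = \psi(x)$ in the outer integral to obtain
$$\int_{0}^{L}\frac{dt}{\sqrt{\int_{t}^{L} g(s)\,ds}} \;=\; \int_{0}^{\sigma}\frac{e^{-H(x)}}{\sqrt{A(x)}}\,dx.$$
The crucial auxiliary estimate driving the proof is
$$A(x) \;\leq\; \frac{e^{-2H(x)}}{2h(x)}, \qquad x \in (0,\sigma),$$
obtained by writing $e^{-2H(r)} = -(e^{-2H(r)})'/(2h(r))$ and integrating by parts on $[x,\sigma)$; the resulting remainder is nonpositive because $1/(2h(r))$ is decreasing ($h$ is increasing by hypothesis), and the boundary contribution at $\sigma$ vanishes since $e^{-2H(\sigma)} = 0$.

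With this bound in hand I then integrate by parts once more in the target integral, using the identity $e^{-H(x)}/\sqrt{A(x)} = -e^{H(x)}\bigl(2\sqrt{A(x)}\bigr)'$, to get
$$\int_{0}^{\sigma}\frac{e^{-H(x)}}{\sqrt{A(x)}}\,dx \;=\; 2\sqrt{A(0)} \;-\; \lim_{x \to \sigma^{-}} 2e^{H(x)}\sqrt{A(x)} \;+\; 2\int_{0}^{\sigma} h(x)\,e^{H(x)}\sqrt{A(x)}\,dx.$$
Both remaining pieces are controlled by the auxiliary bound: the limit at $\sigma$ is dominated by $1/\sqrt{2h(x)} \to 0$, while the bulk integrand satisfies $h(x)\,e^{H(x)}\sqrt{A(x)} \leq \sqrt{h(x)/2}$. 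Thus the whole expression is at most
$$2\sqrt{A(0)} + \sqrt{2}\int_{0}^{\sigma}\sqrt{h(x)}\,dx \;<\; +\infty,$$
the finiteness coming from $\sqrt{h} \in L^{1}((0,\sigma))$, which holds because $\gamma < 2$ in \rife{assumpt2}. The main obstacle is (iv): a naive linearization of $g$ near $L$ fails because $g'(L^{-}) = -\infty$, and the key structural observation is that the single bound $A(x) \leq e^{-2H(x)}/(2h(x))$ does double duty---both killing the boundary term at $\sigma$ and reducing the bulk integral to the integrability of $\sqrt{h}$---which is precisely why $\sqrt{h} \in L^{1}((0,\sigma))$ is the sharp hypothesis.
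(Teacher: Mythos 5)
Your proof is correct, and while i)--iii) coincide with the paper's argument (both rest on $g'(s)=-h(\psi^{-1}(s))$ and on $H(\si)=+\infty$, which holds since $\ga\geq 1$ in \rife{assumpt2} forces $h\notin L^{1}((0,\si))$), your treatment of iv) takes a genuinely different route after the common change of variables reducing the claim to the finiteness of $\int_{0}^{\si}e^{-H(x)}A(x)^{-1/2}\,dx$ with $A(x)=\int_{x}^{\si}e^{-2H(r)}\,dr$. The paper proves, via de l'H\^opital and the precise asymptotics $(\si-s)^{\ga}h(s)\to C$, that $h(t)A(t)e^{2H(t)}$ has a positive finite limit at $\si$, so the integrand is comparable to $\sqrt{h}$ and the integral is finite \emph{if and only if} $\sqrt{h}\in L^{1}((0,\si))$, which holds because $\ga<2$. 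You instead establish the one-sided pointwise bound $A(x)\leq e^{-2H(x)}/(2h(x))$ -- which in fact follows even more directly from $h(r)\geq h(x)$ on $(x,\si)$, a route worth preferring since $h$ is only assumed continuous and increasing, so your Stieltjes-type integration by parts against $1/(2h)$ is a (harmless but avoidable) technical detour -- and then integrate by parts on $[0,b]$, $b<\si$, via $e^{-H}/\sqrt{A}=-e^{H}\bigl(2\sqrt{A}\bigr)'$, getting the explicit bound $2\sqrt{A(0)}+\sqrt{2}\int_{0}^{\si}\sqrt{h}$. What each approach buys: yours only needs $h$ increasing, blowing up at $\si$, $h\notin L^{1}$ (for i)) and $\sqrt{h}\in L^{1}$, so it is more elementary and slightly more general than the paper's, which uses the full strength of \rife{assumpt2}; on the other hand the paper's asymptotic argument yields the converse implication as well, exhibiting $\sqrt{h}\in L^{1}((0,\si))$ as the sharp condition. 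The small constant slip in your domination of the boundary term ($2e^{H}\sqrt{A}\leq\sqrt{2/h}$, not $1/\sqrt{2h}$) is immaterial, since either bound tends to zero.
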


\begin{example}\label{partic}\rm
In the particular case $h(s) = \frac{A}{1-s}$ (with $A > 0)$, it is easy to see that 
$$
g(s) = [1 - (1+A)s]^{\frac{A}{1+A}}\,.
$$
\end{example}

\begin{proof}[{\sl Proof}] Since $\psi^{-1}(0) = 0$, we have $g(0) = 1$. On the other hand, since both $\psi^{-1}(L) 
= \si$ and $H$ is unbounded, we have $g(L) = 0$, so that i) is proved. Furthermore, using that
$$
(\psi^{-1}(s))' = {\rm e}^{H(\psi^{-1}(s))}\,,
$$
we have
\be\label{derig}
g'(s) = -h(\psi^{-1}(s))\,{\rm e}^{-H(\psi^{-1}(s))}(\psi^{-1}(s))' = -h(\psi^{-1}(s))\,,
\ee
which implies both ii) and iii). As for iv), it is easy to see, using the definition of $g$, and by changing variables, that
$$
\int_{0}^{L}\,\frac{dt}{\sqrt{\int_{t}^{L}\,g(s)\,ds}} < +\infty
\quad
\iff
\quad
\int_{0}^{\si}\,\frac{{\rm e}^{-H(t)}\,dt}{\sqrt{\int_{t}^{\si}\,{\rm e}^{-2H(s)}\,ds}} < +\infty\,.
$$
We are going to prove that there exists $D > 0$ such that
\be\label{hop}
\lim_{t \to \si^{-}} \frac{h(t)\,\int_{t}^{\si}\,{\rm e}^{-2H(s)}\,ds}{{\rm e}^{-2H(t)}} = D\,.
\ee
Once we prove this, we will have that
$$
\int_{0}^{\si}\,\frac{{\rm e}^{-H(t)}\,dt}{\sqrt{\int_{t}^{\si}\,{\rm e}^{-2H(s)}\,ds}} < +\infty
\quad
\iff
\quad
\int_{0}^{\si}\,\sqrt{h(t)}\,dt < +\infty\,,
$$
and the latter result is true by \rife{assumpt2} since $1 \leq \ga < 2$.
By \rife{assumpt2}, we have
$$
\lim_{t \to \si^{-}} \frac{h(t)\,\int_{t}^{\si}\,{\rm e}^{-2H(s)}\,ds}{{\rm e}^{-2H(t)}} = 
C \,
\lim_{t \to \si^{-}} \frac{\int_{t}^{\si}\,{\rm e}^{-2H(s)}\,ds}{(\si - t)^{\gamma}\,{\rm e}^{-2H(t)}}\,.
$$
Applying the de l'H\^{o}pital rule, we have
$$
\lim_{t \to \si^{-}} \frac{\int_{t}^{\si}\,{\rm e}^{-2H(s)}\,ds}{(\si - t)^{\gamma}\,{\rm e}^{-2H(t)}}
=
\lim_{t \to \si^{-}} \frac{-{\rm e}^{-2H(t)}}{-[\ga(\si -t)^{\ga-1} + 2(\si-t)^{\ga}h(t)]{\rm e}^{-2H(t)}}\,,
$$
and the latter limit is equal (again by \rife{assumpt2}) to $\frac{1}{2C}$ (if $\ga > 1$) or $\frac{1}{2C+1}$ (if $\ga = 1$). Therefore, \rife{hop} holds, and iv) is proved.
\end{proof}

Since solutions $v$ of \rife{slin} which are almost everywhere smaller than $L$ correspond to solutions $u$ of \rife{noexis} which are almost everywhere smaller than $\si$ {\sl via} the transformation $u = \psi^{-1}(v)$, we are now going to consider problem \rife{slin} on its own.

\begin{theo}\label{exisslin}\sl
For every $\la > 0$ there exists a unique solution $v_{\la}$ of \rife{slin}, with $v_{\la}$ in $\huz$, and $0 \leq v_{\la} \leq L$.
\end{theo}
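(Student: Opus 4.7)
The plan is to prove existence by a Schauder fixed-point argument applied to a truncated version of \rife{slin}, and to recover the pointwise bounds $0 \leq v_\la \leq L$ from the structural identities $g(0) = 1$ and $g(L) = 0$ supplied by Lemma \ref{ong}. Intuitively, the constants $v \equiv 0$ and $v \equiv L$ play the role of sub- and supersolution respectively; since $g$ is merely continuous (indeed $g'(L) = -\infty$ by Lemma \ref{ong}(iii)), a direct monotone iteration is not immediately available, and I will instead truncate and invoke Schauder.

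First, extend $g$ to $\tilde g : \re \to [0,1]$ continuously by setting $\tilde g(s) = 1$ for $s \leq 0$ and $\tilde g(s) = 0$ for $s \geq L$; the resulting $\tilde g$ is continuous, bounded, and nonincreasing. For $w \in \elle 2$, let $T(w) = v \in \huz$ denote the unique weak solution of $-\Delta v = \la f \tilde g(w)$ with zero boundary data. Since $0 \leq \tilde g(w) \leq 1$ and $f \in \elle p$ with $p > \frac{N}{2}$, Stampacchia's $\elle\infty$ estimate (already used in the proof of Theorem \ref{t2}) together with the standard energy estimate yields $\|v\|_{\elle\infty} + \|v\|_{\huz} \leq C(\la, f, \Omega)$ uniformly in $w$. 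Dominated convergence makes $w \mapsto \tilde g(w)$ continuous from $\elle 2$ to $\elle p$, and Rellich's theorem makes $T$ compact; thus a sufficiently large closed ball of $\elle 2$ is mapped into itself, and Schauder's fixed-point theorem produces $v_\la \in \huz \cap \elle\infty$ satisfying $-\Delta v_\la = \la f \tilde g(v_\la)$.

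Next I will show $0 \leq v_\la \leq L$, so that $\tilde g(v_\la) = g(v_\la)$ pointwise and $v_\la$ actually solves \rife{slin}. The lower bound is immediate from the weak maximum principle since $\la f \tilde g(v_\la) \geq 0$ and $v_\la = 0$ on $\partial\Omega$. For the upper bound, test the equation with $(v_\la - L)^+ \in \huz$ (admissible since $v_\la - L \leq -L < 0$ on $\partial\Omega$):
\be
\io |\nabla (v_\la - L)^+|^2 = \la \io f\, \tilde g(v_\la)(v_\la - L)^+ = 0\,,
\ee
the right-hand side being zero because $\tilde g \equiv 0$ on $[L,+\infty)$ by construction. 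Hence $(v_\la - L)^+ \equiv 0$, i.e., $v_\la \leq L$.

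For uniqueness, let $v_1, v_2 \in \huz$ both solve \rife{slin} with $0 \leq v_i \leq L$. Testing the difference of the two equations with $(v_1 - v_2)^+ \in \huz$ yields
\be
\io |\nabla (v_1 - v_2)^+|^2 = \la \io f\bigl(g(v_1) - g(v_2)\bigr)(v_1 - v_2)^+ \leq 0\,,
\ee
where the inequality uses that $g$ is nonincreasing (Lemma \ref{ong}(ii)): on $\{v_1 > v_2\}$ one has $g(v_1) - g(v_2) \leq 0$. Consequently $(v_1 - v_2)^+ \equiv 0$, and swapping the roles of $v_1$ and $v_2$ gives $v_1 = v_2$. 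The subtle point is ensuring that the Schauder fixed point lies in $[0,L]$: this is exactly what is bought by the identity $g(L) = 0$, which turns $v \equiv L$ into a universal barrier for every $\la > 0$ and breaks any dependence of the existence range on the size of $\la$.
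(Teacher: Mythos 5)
Your proposal is correct and follows essentially the same route as the paper: the same truncation of $g$ (to $1$ below $0$ and $0$ above $L$), a fixed-point argument for the truncated semilinear problem, nonnegativity from the sign of the right-hand side, the bound $v_\la \leq L$ by testing with $(v_\la-L)^{+}$, and uniqueness via the monotonicity of $g$ (which is exactly the paper's Lemma \ref{subsup}). The only difference is that you spell out the Schauder details that the paper leaves implicit.
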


\begin{proof}[{\sl Proof}]
Define
$$
\ol{g}(s) = 
\bc
\hfill 1 \hfill & \mbox{if $s < 0$,} \\
g(s) & \mbox{if $0 \leq s \leq L$,} \\
\hfill 0 \hfill & \mbox{if $s > L$,}
\ec
$$
and consider the problem
$$
\bc
-\Delta w = \la\,f\,\ol{g}(w) & \mbox{in $\Omega$,} \\
\hfill w = 0 \hfill & \mbox{on $\partial\Omega$.}
\ec
$$
It is easy to see, by using a fixed point argument, that for every $\la > 0$ there exists a solution $w$ in $\huz$, which is nonnegative since $\la\,f\,\ol{g}(w) \geq 0$ in $\Omega$. Taking $(w-L)^{+}$ as test function in the weak formulation, and using the definition of $\ol{g}$ as well as the fact that $f \geq 0$, we have $w \leq L$. Hence, by definition of $\ol{g}$, $\ol{g}(w) = g(w)$, and so $w$ is a solution of \rife{slin}. Uniqueness of solutions then follows from the next lemma.
\end{proof}

\begin{lemma}\label{subsup}\sl
Let $\ul{v}$ and $\ol{v}$ in $H^1 (\Omega)$ be such that
$$
-\Delta \ul{v} \leq \la\,f\,g(\ul{v})\,,
\quad
-\Delta \ol{v} \geq \la\,f\,g(\ol{v})\,,
$$
in $H^{-1}(\Omega)$ and $(\ul{v}-\ol{v}) \leq 0$ on $\partial \Omega$. Then $\ul{v} \leq \ol{v}$.
\end{lemma}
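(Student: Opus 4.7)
My plan is to carry out the standard sub/supersolution comparison argument, with the nonincreasing character of $g$ (Lemma \ref{ong}, ii)) providing the sign that drives the whole proof. First I would subtract the two distributional inequalities to obtain
$$
-\Delta(\ul{v} - \ol{v}) \leq \la\,f\,[g(\ul{v}) - g(\ol{v})]
\qquad\mbox{in } H^{-1}(\Omega),
$$
and note that the boundary condition $\ul{v} - \ol{v} \leq 0$ on $\partial\Omega$ ensures $(\ul{v} - \ol{v})^{+} \in \huz$, so it is an admissible test function.

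The key step is then to pair the above inequality with $(\ul{v} - \ol{v})^{+}$, yielding
$$
\io |\D(\ul{v} - \ol{v})^{+}|^{2} \leq \la\io f\,[g(\ul{v}) - g(\ol{v})]\,(\ul{v} - \ol{v})^{+}.
$$
On the set $\{\ul{v} > \ol{v}\}$ the monotonicity of $g$ forces $g(\ul{v}) - g(\ol{v}) \leq 0$; elsewhere the test function vanishes. Since $f \geq 0$, the right-hand side is nonpositive, so $\D(\ul{v} - \ol{v})^{+} \equiv 0$ in $\Omega$. Poincar\'e's inequality (applicable because the test function lies in $\huz$) then forces $(\ul{v} - \ol{v})^{+} \equiv 0$, i.e., $\ul{v} \leq \ol{v}$ a.e.\ in $\Omega$.

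The only mild subtlety, and arguably the only place where any care is needed, is that the function $g$ defined by \rife{defg} is defined only on $[0,L]$: without an a priori pointwise confinement of $\ul{v}$ and $\ol{v}$ to $[0,L]$, the statement should be read with the nonincreasing extension $\ol{g}$ introduced in the proof of Theorem \ref{exisslin} (equal to $1$ on $(-\infty,0)$ and to $0$ on $(L,+\infty)$). Since $\ol{g}$ is still nonincreasing on all of $\re$, the sign step above is unaffected, and no real obstacle arises.
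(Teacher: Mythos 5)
Your proof is correct and follows essentially the same route as the paper: subtract the inequalities, test with $(\ul{v}-\ol{v})^{+}\in\huz$, use the monotonicity of $g$ together with $f\geq 0$ to get a nonpositive right-hand side, and conclude via Poincar\'e. Your extra remark about extending $g$ monotonically outside $[0,L]$ is a sensible clarification but not a departure from the paper's argument.
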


\begin{proof}[{\sl Proof}]
Subtracting the above inequalities and choosing $(\ul{v}-\ol{v})^{+}$ as test function yields
$$
0
\leq
\io |\D(\ul{v}-\ol{v})^{+}|^{2}
\leq
\la\,\io \,f\,(g(\ul{v}) - g(\ol{v}))(\ul{v}-\ol{v})^{+}
\leq 0\,,
$$
since, by ii) of Lemma \ref{ong}, $g$ is decreasing. Therefore, $(\ul{v}-\ol{v})^{+} = 0$, i.e., $\ul{v} \leq \ol{v}$.
\end{proof}

\subsection{Construction of unidimensional ``flat'' solutions}

To prove that \rife{noexis} has no solutions for $\la$ large enough, we are going to prove that \rife{slin} has solutions $v$ such that $\mis(\{v = L\}) > 0$ if $\la$ is large enough. In order to deal with the general $N$-dimensional case, we are going to study the one-dimensional equation first, with $f \equiv 1$. Therefore, we are going to fix $R > 0$ and consider the problem
\be\label{1d}
\bc
-v''(s) = \la\,g(v(s)) & \mbox{in $(-R,R)$,} \\
\hfill v(\pm R) = 0\,. \hfill 
\ec
\ee
We know from Theorem \ref{exisslin} that a solution $v$ of \rife{1d} exists for every $\la$ and for every $R$, with $0 \leq v \leq L$. In order to study the properties of the solutions $v$ as $\la$ changes, we are going to study the solutions of a ``shooting'' problem. Define $g(s) \equiv 1$ for $s < 0$, let $0 < \ell < L$, and consider the solution $\vll$ of the Cauchy problem
$$
\bc
-\vll''(s) = \la\,g(\vll(s)) & \mbox{for $s \geq 0$,} \\
\hfill \vll(0) = \ell\,,\ \vll'(0) = 0\,. \hfill 
\ec
\leqno{(P_{\ell})}
$$
Since $g$ is Lipschitz continuous around $\ell$, then there exists a unique solution $\vll$ of \pll\ (at least locally near~0). Note that ($P_{L}$) is singular, since $g'(L) = -\infty$ by iii) of Lemma \ref{ong}. Thus, ($P_{L}$) has a maximal solution, which is $v^{L}(s) \equiv L$, and a minimal solution $v_{L}(s)$, such that any other solution $v$ satisfies $v_{L}(s) \leq v(s) \leq v^{L}(s)$. We are going to prove that ($P_{L}$) has infinitely many solutions not identically equal to $L$.

\begin{theo}\label{pill}\sl
Let $\vll$ be the solution of \pll. Then:
\begin{itemize}
\item[i)] $\vll'(s) < 0$ for every $s > 0$, and $\vll$ is uniquely defined for every $s \geq 0$;

\item[ii)] for every $\ell < L$ there exists $\rll > 0$ such that $\vll(\rll) = 0$. Furthermore,
\be\label{ralph}
\rll = \frac{1}{\sqrt{2\la}}\,\int_{0}^{\ell}\,\frac{dt}{\sqrt{\int_{t}^{\ell}\,g(s)\,ds}};
\ee

\item[iii)] as $\ell$ increases to $L$, $\vll$ increases; therefore, $\rll$ increases, and converges to the finite value
$$
R_{L} = \frac{1}{\sqrt{2\la}}\,\int_{0}^{L}\,\frac{dt}{\sqrt{\int_{t}^{L}\,g(s)\,ds}};
$$

\item[iv)] as $\ell$ increases to $L$, $\vll$ uniformly converges on $[0,\ol{s}]$, for every $\ol{s} > 0$, to $v_{L}$, the minimal solution of ($P_{L}$), with $v_{L}(s) < L$ for every $s > 0$, and $v_{L}(R_{L}) = 0$. Furthermore, $v_{L}$ is the only solution of ($P_{L}$) such that $v_{L}(s) < L$ for every $s > 0$;

\item[v)] if $w$ is a solution (not identically equal to $L$) of ($P_{L}$), then
\be\label{solpiatte}
w(s) = 
\bc
\hfill L \hfill & \mbox{if $0 \leq s \leq R$,}\\
v_{L}(s-R) & \mbox{if $s > R$,}
\ec
\ee
for some $R > 0$.
\end{itemize}
\end{theo}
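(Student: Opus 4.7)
The plan is to base everything on the first integral obtained by multiplying \pll\ by $\vll'$ and integrating:
$$
(\vll'(s))^2 = 2\la\,[F(\ell) - F(\vll(s))], \qquad F(x) := \int_0^x g(t)\,dt,
$$
valid as long as $\vll \in [0,\ell]$. Since $\vll''(0) = -\la g(\ell) < 0$ and $\vll'' \leq 0$ throughout (as $g\geq 0$), $\vll'$ is nonincreasing and starts at $0$, so $\vll'(s) < 0$ for all $s > 0$, giving (i); the uniform bound $|\vll'| \leq \sqrt{2\la F(\ell)}$ coming from the energy prevents blow-up, so $\vll$ is globally defined. Writing $\vll' = -\sqrt{2\la[F(\ell) - F(\vll)]}$ and separating variables produces the explicit formula \rife{ralph} for $\rll$ in (ii); the integrand is integrable at $t = \ell$ because $F(\ell) - F(t) \sim g(\ell)(\ell - t)$ with $g(\ell) > 0$.

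For (iii) I set $u := v_{\ell_2} - v_{\ell_1}$ with $\ell_1 < \ell_2$: one has $u(0) > 0$, $u'(0) = 0$, and $u'' = \la(g(v_{\ell_1}) - g(v_{\ell_2})) > 0$ wherever $u > 0$ by monotonicity of $g$ (Lemma \ref{ong}(ii)). A standard convexity argument then forces $u > 0$ for every $s > 0$, so $v_{\ell_2}(R_{\ell_1}) > 0 = v_{\ell_1}(R_{\ell_1})$ and hence $R_{\ell_1} < R_{\ell_2}$. To prove $\rll \to R_L < +\infty$ I perform the shift $\tau = t + (L-\ell)$ in \rife{ralph}, which rewrites
$$
\sqrt{2\la}\,\rll = \int_{L-\ell}^{L} \frac{d\tau}{\sqrt{\int_\tau^L g(\sigma - (L-\ell))\,d\sigma}};
$$
since $g$ is decreasing, the integrand is dominated by $1/\sqrt{F(L) - F(\tau)}$, which is integrable by Lemma \ref{ong}(iv), so dominated convergence yields the limit $R_L$.

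For (iv), the monotonicity in (iii) gives a pointwise limit $\vll \uparrow v_L \leq L$, while the uniform derivative bound $|\vll'| \leq \sqrt{2\la F(L)}$ upgrades this to locally uniform convergence; passing to the limit in the integral form of \pll\ shows $v_L$ is a $C^2$ solution of $(P_L)$. To prove $v_L(s) < L$ for $s > 0$ I invert the formula of (ii): setting
$$
\Phi_\ell(v) := \int_v^\ell \frac{dt}{\sqrt{2\la[F(\ell) - F(t)]}}, \qquad \Phi_L(v) := \int_v^L \frac{dt}{\sqrt{2\la[F(L) - F(t)]}},
$$
where $\Phi_L$ is continuous and strictly decreasing from $\Phi_L(0) = R_L$ to $\Phi_L(L) = 0$, the same shift as above yields the sandwich
$$
\Phi_L(v) - \eta_\ell \leq \Phi_\ell(v) \leq \Phi_L(v + L - \ell), \qquad \eta_\ell := \int_\ell^L \frac{dt}{\sqrt{2\la[F(L) - F(t)]}} \to 0.
$$
Applied at $v = \vll(s)$, where $\Phi_\ell(\vll(s)) = s$, and letting $\ell \to L$, continuity of $\Phi_L$ forces $v_L(s) = \Phi_L^{-1}(s)$, which is strictly less than $L$ for every $s > 0$ and equals $0$ at $s = R_L$. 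Uniqueness among solutions of $(P_L)$ that remain below $L$ for $s>0$ follows by applying the same separation of variables to any such candidate.

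Finally, for (v), given a solution $w$ of $(P_L)$ not identically equal to $L$, I set $R := \sup\{s \geq 0 : w \equiv L \text{ on } [0,s]\}$. Continuity gives $w(R) = L$, $w'(R) = 0$, so $\tilde w(s) := w(s+R)$ solves $(P_L)$; the maximality of $R$ together with concavity of $\tilde w$ (from $-\tilde w'' = \la g(\tilde w) \geq 0$ and $\tilde w \leq L$, the latter via the energy identity) guarantees $\tilde w(s) < L$ for every $s > 0$, so the uniqueness in (iv) gives $\tilde w = v_L$, which is precisely \rife{solpiatte}. Minimality of $v_L$ in (iv) then follows a posteriori from \rife{solpiatte} and the monotonicity of $v_L$. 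The main technical obstacle is the identification of $v_L$ in (iv): both the integration interval and the singularity of the integrand defining $\Phi_\ell$ move with $\ell$, and the sandwich estimate — combining monotonicity of $g$ with the integrability hypothesis Lemma \ref{ong}(iv) — is the nontrivial device that controls the convergence.
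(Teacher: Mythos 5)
Your proposal is correct, and it reaches every assertion of the theorem, but in two substantive places it takes a genuinely different (and shorter) route than the paper. For the convergence $\rll\to R_L<+\infty$ in iii), the paper builds a piecewise dominating function $\th$ on the dyadic levels $L_k=L(1-2^{-k})$ and compares its integral with $\int_0^L(\int_t^Lg)^{-1/2}\,dt$; your change of variable $\tau=t+(L-\ell)$, which pins the singularity at $\tau=L$ and gives the uniform bound $\bigl(\int_\tau^Lg(\sigma-(L-\ell))\,d\sigma\bigr)^{-1/2}\le\bigl(\int_\tau^Lg(\sigma)\,d\sigma\bigr)^{-1/2}$, reduces this to a direct dominated convergence argument based on Lemma \ref{ong} iv). In iv), the paper identifies $v_L$ as the minimal solution by comparison with $\vll$, proves $v_L<L$ for $s>0$ by a concavity-and-shift argument, and obtains uniqueness by an integration by parts that uses minimality; you instead invert the time map, getting $v_L=\Phi_L^{-1}$ on $[0,R_L]$ from the sandwich $\Phi_L(v)-\eta_\ell\le\Phi_\ell(v)\le\Phi_L(v+L-\ell)$ (both inequalities are correct, the upper one by the same shift, the lower one since $F(L)-F(t)\ge F(\ell)-F(t)$), which yields $v_L(R_L)=0$, $v_L<L$ on $(0,R_L]$, and uniqueness among solutions below $L$ in one stroke; minimality is then recovered a posteriori from v), with no circularity since your uniqueness proof does not rely on it. The convexity argument for monotonicity in $\ell$ is a sound variant of the paper's energy-type comparison.

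A few small repairs. The bound $|\vll'|\le\sqrt{2\la F(\ell)}$ is valid only while $\vll\ge0$; for global existence in i) and for equicontinuity on a fixed $[0,\ol{s}]$ with $\ol{s}>\rll$ use instead $|\vll''|\le\la$ (since $0\le g\le1$), hence $|\vll'(s)|\le\la s$, as the paper does. State explicitly that uniqueness in i) holds because $\vll$ stays below $\ell<L$, where $g$ is Lipschitz. In the comparison of iii) one only has $u''\ge0$ (not $>0$) where $u>0$, since $g$ need not be strictly decreasing, but the convexity argument needs no more. Finally, the inversion $v_L=\Phi_L^{-1}$ gives $v_L<L$ only on $(0,R_L]$; for $s>R_L$ add the one-line remark that $v_L$ is nonincreasing with $v_L(R_L)=0$.
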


\begin{proof}[{\sl Proof}]
i) The proof is easy: since $\vll''(0) = -\la g(\ell) < 0$, we have $\vll'(s) < 0$ for $s > 0$ and sufficiently small. If there exists $\ol{s}$ such that $\vll'(\ol{s}) = 0$, then $\vll$ is decreasing in $[0,\ol{s}]$, so that (since $g$ is decreasing as well), $g(\vll(s)) \geq g(\ell)$ for every $s$ in $[0,\ol{s}]$. But then we get a contradiction integrating the equation:
$$
0 = \vll'(\ol{s}) - \vll'(0) = \int_{0}^{\ol{s}} \vll''(t) dt
= -\la\int_{0}^{\ol{s}} g(\vll(t)) dt \leq -g(\ell) \ol{s} < 0\,.
$$
Since $\vll'(s) < 0$, $\vll$ stays away from $L$ (where $g$ is no longer Lipschitz continuous). Therefore, $\vll$ is unique and exists for every $s \geq 0$.

\medskip

ii) If we suppose that $\vll(s) \neq 0$ for every $s \geq 0$, then $\vll(s) > 0$ for every $s \geq 0$. Since $\vll$ is decreasing by i), there exists
\be\label{limt}
\lim_{s \to +\infty}\,\vll(s) = T\,,
\ee
with $0 \leq T < \ell$. Therefore, from the equation we get that
$$
\lim_{s \to +\infty}\,\vll''(s) = -\la\,g(T) < 0\,,
$$
which implies
$$
\lim_{s \to +\infty}\,\vll'(s) = \lim_{s \to +\infty}\,\int_{0}^{s}\,\vll''(t)\,dt = -\infty\,.
$$
This, however, yields
$$
\lim_{s \to +\infty}\,\vll(s) = \lim_{s \to +\infty}\,\int_{0}^{s}\,\vll'(t)\,dt + \ell = -\infty\,,
$$
a contradiction with \rife{limt}. Thus, there exists $\rll > 0$ such that $\vll(\rll) = 0$.

We now multiply the equation by $\vll'$ and integrate on $[0,s]$. We obtain, using the initial conditions on $\vll$ and $\vll'$,
$$
-\frac12 [\vll'(s)]^{2} = \la\int_{0}^{s}\,g(\vll(t))\,\vll'(t)\,dt =
\la [G(\vll(s)) - G(\ell)]\,,
$$
where we have defined
$$
G(s) = \int_{0}^{s}\,g(t)\,dt\,.
$$
Therefore, taking into account that $\vll'(s) < 0$,
\be\label{derv}
\vll'(s) = -\sqrt{2\la [G(\ell) - G(\vll(s))]}\,.
\ee
Since $\vll'(s) < 0$ for every $s > 0$, we have $G(\vll(s)) \neq G(\ell)$ for every $s > 0$. Therefore, we can divide by $\sqrt{G(\ell) - G(\vll(s))}$ and integrate between~0 and $\rll$ to obtain
$$
\int_{0}^{\rll}\,\frac{\vll'(s)\,ds}{\sqrt{G(\ell) - G(\vll(s))}} = -\sqrt{2\la}\,\rll\,.
$$
Setting $t = \vll(s)$ in the first integral, recalling that $\vll(0) = \ell$, that $\vll(\rll) = 0$, and the definition of $G$, we have
$$
\int_{0}^{\ell}\,\frac{dt}{\sqrt{\int_{t}^{\ell}\,g(s)\,ds}} = \sqrt{2\la}\,\rll\,,
$$
which then gives \rife{ralph}. Observe that the integral is singular at $t = \ell$, but that, since $g$ is decreasing,
$$
\int_{0}^{\ell}\,\frac{dt}{\sqrt{\int_{t}^{\ell}\,g(s)\,ds}}
\leq
\frac{1}{\sqrt{g(\ell)}}
\int_{0}^{\ell}\,\frac{dt}{\sqrt{\ell - t}}
=
2\sqrt{\frac{\ell}{g(\ell)}}
\,.
$$

\medskip

iii) To prove that $\vll$ increases as $\ell$ increases, let $\ell < \ell'$ and let $\vll$ and $v_{\ell'}$ be the solutions of \pll\ and ($P_{\ell'}$) respectively. Since $\vll(0) < v_{\ell'}(0)$, then $\vll \leq v_{\ell'}$ in a neighbourhood of the origin. If there exists $\ol{s}$ such that $\vll(\ol{s}) = v_{\ell'}(\ol{s})$ then, subtracting the equations, multiplying by $\vll - v_{\ell'}$, and integrating on $[0,\ol{s}]$ yields
$$
-\int_{0}^{\ol{s}}(\vll''- v_{\ell'}'')(\vll - v_{\ell'})
=
\la\int_{0}^{\ol{s}}(g(\vll) - g(v_{\ell'}))(\vll- v_{\ell'})\,,
$$
and the right hand side is negative since $g$ is decreasing. Integrating by parts the left hand side (and observing that $\vll'(0)- v_{\ell'}'(0) = 0$, and that $\vll(\ol{s})- v_{\ell'}(\ol{s}) = 0$), we obtain
$$
0 \leq \int_{0}^{\ol{s}}(\vll'(t) - v_{\ell'}'(t))^{2}\,dt \leq 0\,,
$$
which implies $\vll'(s) \equiv v_{\ell'}'(s)$ on $[0,\ol{s}]$, and so 
$$
\vll(\ol{s}) = \ell + \int_{0}^{\ol{s}}\,\vll'(t)\,dt
= \ell + \int_{0}^{\ol{s}}\,v_{\ell'}'(t)\,dt = \ell - \ell' + v_{\ell'}(\ol{s})\,,
$$
a contradiction with the assumptions $\vll(\ol{s}) = v_{\ell'}(\ol{s})$ and $\ell < \ell'$. Thus, $\vll$ increases with $\ell$, which implies that $\rll$ increases as well; therefore, there exists
$$
R_{L} = \lim_{\ell \to L^{-}}\,\rll\,.
$$
Using \rife{ralph}, we have
$$
\rll = \frac{1}{\sqrt{2\la}}\int_{0}^{\ell} \frac{dt}{\sqrt{\int_{t}^{\ell} g(s) ds}}
=
\frac{1}{\sqrt{2\la}}\int_{0}^{L} \frac{\chi_{[0,\ell)}(t) dt}{\sqrt{\int_{t}^{\ell} g(s) ds}} = \frac{1}{\sqrt{2\la}}\int_{0}^{L}\th_{\ell}(t) dt,
$$
where we have defined
$$
\th_{\ell}(t) = \frac{\chi_{[0,\ell)}(t)}{\sqrt{\int_{t}^{\ell}\,g(s)\,ds}}\,.
$$
We clearly have, for every $0 \leq t < L$,
$$
\lim_{\ell \to L^{-}}\,\th_{\ell}(t) = \frac{\chi_{[0,L)}(t)}{\sqrt{\int_{t}^{L}\,g(s)\,ds}} = \th_{L}(t)\,,
$$
so that the proof of iii) will be complete if we prove that
\be\label{limleb1}
\lim_{\ell \to L^{-}}\,\int_{0}^{L}\,\th_{\ell}(t)\,dt = \int_{0}^{L}\,\th_{L}(t)\,dt\,.
\ee
Define $L_{n} = L(1 - \frac{1}{2^{n}})$, and 
$$
\th_{n}(t) = \th_{L_{n}}(t) = \frac{\chi_{[0,L_{n})}(t)}{\sqrt{\int_{t}^{L_{n}}\,g(s)\,ds}}\,.
$$
If $m < n$, in $[0,L_{m})$ we have, since $g(s) \geq 0$,
$$
\th_{m}(t) = \frac{1}{\sqrt{\int_{t}^{L_{m}}\,g(s)\,ds}}
\geq
\frac{1}{\sqrt{\int_{t}^{L_{n}}\,g(s)\,ds}} = \th_{n}(t)\,,
$$
so that
\be\label{mn}
m < n \ \Rightarrow \ \th_{m}(t) \geq \th_{n}(t) \quad \mbox{in $[L_{m-1},L_{m})$}\,.
\ee
Let now
$$
\th(t) = \sum_{k = 0}^{+\infty}\,\th_{k+1}(t)\,\chi_{[L_{k},L_{k+1})}(t)\,,
$$
and observe that, by \rife{mn}, 
$$
\th(t) \geq \sum_{k = 0}^{n-1}\,\th_{k+1}(t)\chi_{[L_{k},L_{k+1})}(t) \geq \sum_{k = 0}^{n-1}\,\th_{n}(t)\chi_{[L_{k},L_{k+1})}(t) = \th_{n}(t)\,.
$$
Since $\th_{n}$ converges to $\th_{L}$, we can apply Lebesgue theorem to prove \rife{limleb1} if we prove that
\be\label{limleb2}
\int_{0}^{L}\,\th(t)\,dt < +\infty\,.
\ee
We have, by monotone convergence theorem, and recalling the definition of $\th_{k}(s)$,
$$
\int_{0}^{L}\,\th(t)\,dt 
=
\sum_{k = 0}^{+\infty}\,\int_{L_{k}}^{L_{k+1}}\,\th_{k+1}(t)\,dt
=
\sum_{k = 0}^{+\infty}\,\int_{L_{k}}^{L_{k+1}}\,\frac{dt}{\sqrt{\int_{t}^{L_{k+1}}\,g(s)\,ds}}\,.
$$
Since $g$ is decreasing, we have
$$
\int_{t}^{L_{k+1}}\,g(s)\,ds \geq g(L_{k+1})(L_{k+1}-t)\,,
$$
so that
$$
\int_{0}^{L}\,\th(t)\,dt
\leq
\sum_{k = 0}^{+\infty}\,
\frac{1}{\sqrt{g(L_{k+1})}}\,\int_{L_{k}}^{L_{k+1}}\,\frac{dt}{\sqrt{L_{k+1}-t}}
=
2\sum_{k = 0}^{+\infty}\,\frac{\sqrt{L_{k+1} - L_{k}}}{\sqrt{g(L_{k+1})}}\,.
$$
Recalling the definition of $L_{k}$, we then have
\be\label{primastima}
\int_{0}^{L}\,\th(t)\,dt \leq 2\sqrt{L}\,\sum_{k = 0}^{+\infty}\,\frac{1}{\sqrt{g(L_{k+1})2^{k+1}}}
\,.
\ee
On the other hand, we have
$$
\int_{0}^{L}\,\frac{dt}{\sqrt{\int_{t}^{L}\,g(s)\,ds}}
=
\sum_{k = 0}^{+\infty}\,\int_{L_{k}}^{L_{k+1}}\,\frac{dt}{\sqrt{\int_{t}^{L}\,g(s)\,ds}}\,.
$$
Once again, since $g$ is decreasing, we have, for $t$ in $(L_{k},L_{k+1})$,
$$
\int_{t}^{L}\,g(s)\,ds \leq g(t)\,(L-t) \leq g(L_{k})(L-t)\,,
$$
and so
$$
\int_{0}^{L}\,\frac{dt}{\sqrt{\int_{t}^{L}\,g(s)\,ds}}
\geq
\sum_{k = 0}^{+\infty}\,\frac{1}{\sqrt{g(L_{k})}}\int_{L_{k}}^{L_{k+1}}\,\frac{dt}{\sqrt{L-t}}\,.
$$
Therefore, 
$$
\int_{0}^{L}\,\frac{dt}{\sqrt{\int_{t}^{L}\,g(s)\,ds}}
\geq
2\sum_{k = 0}^{+\infty}\,\frac{\sqrt{L-L_{k}}-\sqrt{L-L_{k+1}}}{\sqrt{g(L_{k})}}\,,
$$
and so, recalling the definition of $L_{k}$,
$$
\int_{0}^{L}\,\frac{dt}{\sqrt{\int_{t}^{L}\,g(s)\,ds}}
\geq
\sqrt{2L}(\sqrt{2}-1)\,\sum_{k = 0}^{+\infty}\,\frac{1}{\sqrt{g(L_{k})2^{k}}}\,.
$$
Recalling \rife{primastima}, we then have
$$
\int_{0}^{L}\,\th(t)\,dt
\leq
\frac{\sqrt{2}}{\sqrt{2}-1}\,\int_{0}^{L}\,\frac{dt}{\sqrt{\int_{t}^{L}\,g(s)\,ds}}\,,
$$
and the latter integral is finite by iv) of Lemma \ref{ong}. Thus \rife{limleb2} is proved, and the proof of iii) is complete.

\medskip

iv) Let $\ol{s} > 0$ be fixed, and consider $\vll(s)$ on $[0,\ol{s}]$. Since $\vll$ increases as $\ell$ increases to $L$, then $\vll(s)$ converges to some function $v_{L}(s)$.
Since $g(s) \leq 1$, then
$$
|\vll'(s)| = \la\,\int_{0}^{s}\,g(\vll(t))\,dt \leq \la\,s \leq \la\,\ol{s}\,,
\quad
\forall s \in [0,\ol{s}]\,,
$$
so that $|\vll'(s)|$ is uniformly bounded with respect to $\ell$ on $[0,\ol{s}]$. Since $\vll(0) = \ell$ is bounded as well, Ascoli-Arzel\`{a} theorem implies that $\vll$ converges uniformly to $v_{L}$ as $\ell$ tends to $L$. This convergence, and the fact that $\rll$ converges to $R_{L}$, imply that $v_{L}(R_{L}) = 0$.
Since $\vll$ increases to $v_{L}$, the fact that $g$ is decreasing implies that $g(\vll(s))$ decreases to $g(v_{L}(s))$. Therefore, and since $g(s) \leq 1$, Lebesgue theorem implies, for every $s$ in $[0,\ol{s}]$,
$$
\lim_{\ell \to L^{-}}\,\int_{0}^{s}\,g(\vll(t))\,dt 
=
\int_{0}^{s}\,g(v_{L}(t))\,dt\,,
$$
so that for every $s$ in $[0,\ol{s}]$ there exists
$$
\lim_{\ell \to L^{-}}\,\vll'(s) = 
- \lim_{\ell \to L^{-}}\,\la\int_{0}^{s}\,g(\vll(t))\,dt
=
-\la\int_{0}^{s}\,g(v_{L}(t))\,dt = w(s)\,.
$$
Since from the equation we have that $(\vll')'$ is uniformly bounded (with respect to $\ell$), and furthermore $\vll'(0) = 0$ is bounded as well, a further application of Ascoli-Arzel\`a theorem implies that $\vll'$ uniformly converges to $w$ on $[0,\ol{s}]$. This fact (together with the convergence of $\vll$ to $v_{L}$) implies that $w = v_{L}'$. Therefore,
$$
v_{L}'(s) = -\la\int_{0}^{s}\,g(v_{L}(t))\,dt\,,
$$
which implies (since $g(v_{L}(t))$ is continuous) that $v_{L}$ is a solution of
$$
\bc
-v_{L}''(s) = \la\,g(v_{L}(s)) & \mbox{for $s \geq 0$,}\\
\hfill v_{L}(0) = L\,,\ v_{L}'(0) = 0\,. \hfill
\ec
\leqno{(P_{L})}
$$
Since $v_{L}(R_{L}) = 0$, we have that $v_{L}$ is not identically equal to $L$. To prove that $v_{L}$ is the minimal solution of ($P_{L}$), let $w$ be a solution of ($P_{L}$). We then have that $w \geq \vll$ for every $\ell < L$. The proof of this fact can be achieved with the same ideas used to prove that $\vll \leq v_{\ell'}$ if $\ell < \ell'$ in iii). Since $v_{L}$ is the limit of $\vll$, we then have $v_{L} \leq w$, as desired.

To prove that $v_{L}(s) < L$ for every $s > 0$, suppose that there exists $\ol{s} > 0$ such that $v_{L}(\ol{s}) = L$. Then $v_{L}(s) \equiv L$ in $[0,\ol{s}]$. Indeed, since $v_{L}''(s) \leq 0$, $v_{L}$ is concave, and so its graph is above the (horizontal) line connecting $(0,L)$ and $(\ol{s},L)$, and below the tangent line at $(0,L)$, which (since $v_{L}'(0) = 0$) is the same horizontal line. Thus, $v_{L}$ is constantly equal to $L$ on $[0,\ol{s}]$. If we define
$$
S = \sup \{ s > 0 : v_{L}(s) = L\}\,,
$$
we have that $0 < \ol{s} \leq S < R_{L}$, that $v_{L}(S) = L$, and that $v_{L}'(S) = 0$. Therefore, $v_{L}$ is a solution of
$$
\bc
-v_{L}''(s) = \la g(v_{L}(s)) & \mbox{for $s \geq S$,}\\
\hfill v_{L}(S) = L\,,\ v_{L}'(S) = 0\,,
\ec
$$
which is decreasing (hence strictly decreasing) for $s > S$ (the proof of this fact is analogous to the one in i)).
If we define $z(s) = v_{L}(s+S)$, then $z(s) \leq v_{L}(s)$, and $z$ is a solution of ($P_{L}$). Since $v_{L}$ is the minimal solution of the same problem, then $v_{L}(s) \leq z(s)$, so that $v_{L} \equiv z$. This implies that $S = 0$, a contradiction since $S \geq \ol{s} > 0$.

To prove that $v_{L}$ is the only solution of $P_{L}$ such that $v_{L}(s) < L$ for every $s > 0$, let $w$ be another such solution. Since $w$ has no ``flat'' zones (being $w'(s) < 0$ for every $s > 0$), then $w(R_{L}) = 0$ (just start from the equation and perform the same calculations used to obtain \rife{ralph} for $\rll$). We therefore have that $v_{L}(0) = w(0)$, and that $v_{L}(R_{L}) = w(R_{L})$. Subtracting the equations satisfied by $v_{L}$ and $w$, multiplying by $v_{L}- w$, integrating, using that $g$ is decreasing and that $v_{L}(s) \leq w(s)$ for every $s > 0$ since $v_{L}$ is the minimal solution, we have $v_{L}' \equiv w'$, and so $v_{L} \equiv w$.

\medskip

v) Let $w$ be a solution of ($P_{L}$) which is not identically equal to $L$. Since $v_{L}(s)$ is the unique solution of ($P_{L}$) which is different from $L$ for every $s > 0$, there exists $\ul{s}$ such that $w(\ul{s}) = L$. Reasoning as in iv) (i.e., using the concavity of $w$), we have that $w(s) \equiv L$ on $[0,\ul{s}]$. Define 
$$
R = \sup\{s > 0 : w(s) = L \} > 0\,,
$$
and observe that $R < +\infty$ since $w$ is not identically equal to $L$. Setting $z(s) = w(s+R)$ for $s \geq 0$, and reasoning as in iv), one has that $z$ is a solution of ($P_{L}$), and that $z(s) < L$ for every $s > 0$. By uniqueness, $z(s) = v_{L}(s)$, and so $w$ is of the form \rife{solpiatte}.
\end{proof}

\medskip

Theorem \ref{pill} allows us to prove the existence of a flat solution in dimension $N=1$. In fact, let now $R$ be fixed, and consider the solution $v$ to the Dirichlet problem \rife{1d}. Since $v$ is symmetric with respect to the origin, then $v'(0) = 0$. Setting $\ell = v(0)$, we then have that $v = \vll$ (if $\ell < L$), or that $v$ is one of the solutions of ($P_{L}$) which are between $v_{L}$ and $v^{L} \equiv L$ (if $\ell = L$). 
Therefore, it is easy to see that $v$ is almost everywhere smaller than $L$ (i.e., $v$ has no ``flat'' zones where it is constantly equal to $L$) if and only if $R \leq R_{L}$. In other words, if $\la$ is fixed, and $R > R_{L}$, then any solution $v$ of \rife{1d} has a nonzero measure ``flat'' zone $\{v = L\}$. Observe that since 
$$
R_{L} = \frac{1}{\sqrt{2\la}}\int_{0}^{L}\,\frac{dt}{\int_{t}^{L}\,g(s)\,ds}\,,
$$
then $R_{L} = C\la^{-\frac12}$, so that, for every $R > 0$, we have $R > R_{L}$ for $\la$ large enough. Therefore, problem \rife{1d} has ``flat'' solutions; hence there are no solutions almost everywhere smaller than $\si$ of the corresponding quasilinear singular problem \rife{noexis} in dimension~1; i.e., $\La_{f \equiv 1}$ is finite.

\subsection{Construction of radial ``flat'' solutions}

Now we turn our attention to the $N$-dimensional semilinear problem \rife{slin}. We are going to prove that, under the same assumptions on $h$ for which the one-dimensional semilinear problem has ``flat'' solutions for $\la$ large, problem \rife{slin} has ``flat'' solutions as well. In order to do that, we are going to prove the following result.

\begin{theo}\label{sotto}\sl
Let $\Omega = B_{R}(0)$ and $f \equiv 1$; then, for $\la$ large enough, there exists a subsolution $\ul{v}$ of \rife{slin} such that $\mis(\{v = L\}) > 0$. 
\end{theo}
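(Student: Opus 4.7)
The plan is to construct, for $\la$ large, an explicit radial subsolution with a ``flat core'' by pasting the one-dimensional minimal solution of Theorem \ref{pill} onto a central ball where $\ul v\equiv L$. Set $\tilde\la=\la/2$ and let $v_L$ be the minimal solution of $(P_L)$ with parameter $\tilde\la$ provided by iv) of Theorem \ref{pill}: $v_L$ is defined on $[0,R_L]$, with $v_L(0)=L$, $v_L'(0)=0$, $v_L(R_L)=0$, and the first integral $|v_L'(s)|^{2} = 2\tilde\la(G(L)-G(v_L(s))) = \la(G(L)-G(v_L(s)))$ coming from \rife{derv}. By iii) of Theorem \ref{pill} one has $R_L = C_0/\sqrt{\la}$ for a constant $C_0$ depending only on $g$, so $r_1 := R - R_L$ is positive for $\la$ large; define
\[
\ul v(x)=\begin{cases} L, & |x|\leq r_1,\\ v_L(|x|-r_1), & r_1\leq |x|\leq R.\end{cases}
\]
The matching conditions $v_L(0)=L$, $v_L'(0)=0$ give $\ul v\in C^{1}(\overline{B_R})\cap \huz$; the boundary value $v_L(R_L)=0$ gives $\ul v=0$ on $\partial B_R$; and $\{\ul v=L\}=\overline{B_{r_1}}$ has positive measure.

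The next step is to verify $-\Delta\ul v \leq \la g(\ul v)$. This is immediate on $|x|<r_1$. On the annulus $r_1<|x|<R$, setting $u(r) = v_L(r-r_1)$ and using $-v_L''=\tilde\la g(v_L)$ together with $v_L'\leq 0$, the radial form of the Laplacian gives
\[
-\Delta\ul v(x) = \tfrac{\la}{2}\,g(\ul v(x)) + \tfrac{N-1}{r}\,|u'(r)|,
\]
so the inequality reduces to $(N-1)|u'(r)|/r \leq (\la/2)\,g(u(r))$. Squaring and invoking the first integral, this is equivalent to the pointwise bound
\[
\Psi(u) := \frac{G(L)-G(u)}{g(u)^{2}} \leq \frac{\la\, r^{2}}{4(N-1)^{2}}\qquad\text{for every } u\in[0,L].
\]
Since $R_L\to 0$ as $\la\to\infty$, one has $r\geq r_1\geq R/2$ eventually, so it is enough to find $M$ with $\Psi\leq M$ on $[0,L]$ and then take $\la$ with $16(N-1)^{2}M/R^{2}\leq \la$.

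The main obstacle is the finiteness of $M=\sup_{[0,L]}\Psi$. The function is continuous on $[0,L)$; the delicate point is the behavior as $u\to L^{-}$, where numerator and denominator both vanish. Substituting $u=\psi(\tau)$, so that $g(\psi(\tau))=e^{-H(\tau)}$, recasts the ratio as
\[
\Psi(\psi(\tau)) = \frac{\int_{\tau}^{\sigma} e^{-2H(s)}\,ds}{e^{-2H(\tau)}},
\]
which is exactly the quantity treated by de l'H\^opital in the proof of iv) of Lemma \ref{ong}: under assumption \rife{assumpt2} the limit \rife{hop} yields $\Psi(\psi(\tau))\sim D/h(\tau)\to 0$ as $\tau\to\sigma^{-}$ because $h(\tau)\to+\infty$. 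Hence $\Psi$ extends continuously by $0$ at $u=L$ and $M<+\infty$. Choosing $\la$ large enough that both $r_1>R/2$ and $16(N-1)^{2}M/R^{2}\leq\la$ then produces the desired subsolution, paving the way for constructing a solution of \rife{slin} with a flat zone via the sub-/super-solution method (with $L$ as supersolution).
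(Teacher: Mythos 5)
Your construction is correct and is essentially the paper's: paste the flat value $L$ on a central ball onto the one-dimensional minimal solution $v_{L}$ of $(P_{L})$, use the first integral \rife{derv} to reduce the subsolution inequality on the annulus to the pointwise bound $\int_{u}^{L}g(s)\,ds\le C\,g^{2}(u)$ on $[0,L]$, and absorb the extra curvature term $(N-1)|u'|/r$ into half of $\la g$. The only real difference is bookkeeping and the justification of the key estimate. The paper normalizes the ODE to $\la=1$ and rescales space at the end ($\ul v(x)=w(\ol R x/R)$, a subsolution for $\la=2\ol R^{2}/R^{2}$), whereas you run the ODE at $\la/2$ and place the flat core at radius $r_{1}=R-R_{L}$ directly; these are equivalent. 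For the boundedness of $\Psi(u)=\bigl(G(L)-G(u)\bigr)/g^{2}(u)$, the paper argues via the function $\eta(t)=Cg^{2}(L-t)-\int_{L-t}^{L}g$, using $g'(s)=-h(\psi^{-1}(s))$ from \rife{derig} together with \rife{h2} and the strict monotonicity of $h$ (with a separate continuity argument when $h(0)=0$); you instead substitute $u=\psi(\tau)$ and invoke the de l'H\^opital limit \rife{hop} from Lemma \ref{ong} to get $\Psi\to 0$ at $L$. Your route is slicker but uses \rife{hop}, hence the full assumption \rife{assumpt2}; the paper's argument for \rife{issub} needs only \rife{h2} and the monotonicity of $h$, so it is marginally more general. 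Since Theorem \ref{sotto} is invoked only inside the proof of Theorem \ref{nhnl1}, where \rife{assumpt2} is in force, your reliance on \rife{hop} is legitimate. Two harmless points you may want to make explicit: at $u=L$ (i.e.\ $r=r_{1}$) the inequality $(N-1)|u'|/r\le(\la/2)g(u)$ holds as $0\le 0$, so dividing by $g^{2}(u)$ is only needed for $u<L$; and the pasted radial profile is in fact a solution of $(P_{L})$ by \rife{solpiatte}, hence $C^{2}$, so the differential inequality holds classically and no distributional interface term needs discussion.
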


\begin{proof}
Let $v_{L}(s)$ be the minimal solution of the one-dimensional Cauchy problem
$$
\bc
-v''_{L}(s) = g(v_{L}(s)) & \mbox{if $s \geq 0$,}\\
\hfill v_{L}(0) = L\,, \ v'_{L}(0) = 0\,,
\ec
$$
and let $R_{L}$ be such that $v_{L}(R_{L}) = 0$.
Let $\ul{R} > 0$, and consider the function
$$
w(s) = 
\bc
\hfill L \hfill & \mbox{if $0 \leq s \leq \ul{R}$,} \\
v_{L}(s-\ul{R}) & \mbox{if $s > \ul{R}$.}
\ec
$$ 
By \rife{solpiatte}, $w$ is a $C^{2}$ ``flat'' solution of the same problem solved by $v_{L}$, with $w(R_{L}+\ul{R}) = 0$. 
If we consider $w$ as a radial function, we then have
$$
-\Delta w = -w''(r) - (N-1)\frac{w'(r)}{r} = g(w(r)) - (N-1)\frac{w'(r)}{r}\,.
$$
We are going to prove that there exists $\ul{R} > 0$ such that $-\Delta w \leq 2\,g(w)$ in $B_{R_{L} + \ul{R}}(0)$. In order to do that, it is enough to prove that
$$
-(N-1)\frac{w'(r)}{r} \leq g(w(r))\,,
$$
for every $\ul{R} \leq r \leq R_{L} + \ul{R}$, since if $r < \ul{R}$ we have $-\Delta w = 0 = 2g(w)$. Thus, we have to prove that
$$
-(N-1)\frac{v_{L}'(r-\ul{R})}{r} \leq g(v_{L}(r-\ul{R}))\,, \quad \forall 0 \leq r -\ul{R} \leq R_{L} \,, 
$$
or, equivalently, that
$$
-(N-1)\frac{v_{L}'(s)}{s + \ul{R}} \leq g(v_{L}(s))\,,
\quad
\forall 0 \leq s \leq R_{L}\,.
$$
Recalling \rife{derv}, written for $\ell = L$ and $\la = 1$, and the definition of $G$, we have
$$
v'_{L}(s) = -\sqrt{2\int_{v_{L}(s)}^{L}\,g(t)\,dt}\,,
$$
so that we have to prove that
$$
(N-1)\frac{\sqrt{2\int_{v_{L}(s)}^{L}\,g(t)\,dt}}{s + \ul{R}} \leq g(v_{L}(s))\,,
\quad
\forall 0 \leq s \leq R_{L}\,.
$$
We are going to prove that there exists $\ul{R} > 0$ such that the stronger inequality
$$
(N-1)\frac{\sqrt{2\int_{v_{L}(s)}^{L}\,g(t)\,dt}}{\ul{R}} \leq g(v_{L}(s))\,,
\quad
\forall 0 \leq s \leq R_{L}\,,
$$
holds; that is, $\ul{R} > 0$ is such that
$$
\int_{v_{L}(s)}^{L}\,g(t)\,dt
\leq
\frac{\ul{R}^{2}}{2(N-1)^{2}}\,g^{2}(v_{L}(s))\,,
\quad
\forall 0 \leq s \leq R_{L}\,.
$$
Since $v_{L}(s)$ takes values in $[0,L]$ as $s$ ranges between~0 and~$R_{L}$, finding $\ul{R}$ such that the above inequality holds amounts to finding $C > 0$ such that
\be\label{issub}
\int_{L-t}^{L}\,g(s)\,ds \leq C\,g^{2}(L-t)\,,
\quad
\forall 0 \leq t \leq L\,.
\ee
Define
$$
\eta(t) = C\,g^{2}(L-t) - \int_{L-t}^{L}\,g(s)\,ds\,,
$$
so that \rife{issub} is equivalent to proving that $\eta(t) \geq 0$ for every $t$ in $[0,L]$. We have $\eta(0) = 0$, and
$$
\eta'(t) = -2Cg(L-t)g'(L-t) - g(L-t) = g(L-t)[-2Cg'(L-t) - 1]\,,
$$
so that $\eta'(t) > 0$ for every $t$ (which then implies $\eta(t) \geq 0$ as desired) if there exists $C > 0$ such that
\be\label{daverif}
g'(L-t) \leq -\frac{1}{2C}\,,
\quad
\forall 0 \leq t \leq L\,.
\ee
Indeed,  by \rife{derig} we have
 $$
 g'(L-t) = -h(\psi^{-1}(L-t))\,.
 $$ 
 Now, if   $h(0) > 0$ then such a $C$ exists using \rife{h2}, and since $h$ is strictly increasing. On the contrary, if  $h(0) = 0$, \rife{daverif} fails near~0. In this case, however, we have
$$
\eta(L) = Cg^{2}(0) - \int_{0}^{L}\,g(s)\,ds = C - \int_{0}^{L}\,g(s)\,ds\,,
$$
so that there exists $C_{1} > 0$ such that $\eta(L) > 0$. Since $\eta$ is continuous, there exists $\de > 0$ such that $\eta(s) > 0$ for every $s$ in $[L-\de,L]$. Since $h$ is strictly increasing, there exists $C_{2} > 0$ such that \rife{daverif} holds true in $[0,L-\de]$. Taking $C = \max(C_{1},C_{2})$ we then have $\eta(t) \geq 0$ for every $t$, as desired.

Thus, if we define $\ol{R} = R_{L} + \ul{R}$, we have found a function $w$ such that $w(\ol{R}) = 0$, and such that $-\Delta w \leq 2\,g(w)$ in $B_{\ol{R}}(0)$. Let now $R > 0$ be fixed, and consider 
$\ul{v}(x) = w(\ol{R}\,x/R)$. It is easy to see that
$$
-\Delta \ul{v} = -\frac{\ol{R}^{2}}{R^{2}}\Delta w \leq \frac{2\ol{R}^{2}}{R^{2}}g(w) = \frac{2\ol{R}^{2}}{R^{2}}g(\ul{v})\,,
\quad
\ul{v}(|x| = R) = 0\,,
$$
so that $\ul{v}$ is a subsolution of \rife{slin} for $\la = \frac{2\ol{R}^{2}}{R^{2}}$.
\end{proof}

Once we know that, for $\la$ large enough, there exists a radial ``flat'' subsolution in $B_{R}$, we can give the proof of the nonexistence theorem.

\begin{proof}[{\sl Proof of Theorem \ref{nhnl1}}]
Let $x_{0}$ in $\Omega$ and $R > 0$ be such that $B_{R}(x_{0}) \subset \Omega$, and let $\la$ be large enough so that there exists a subsolution $\ul{v}$ of
$$
\bc
-\Delta v = \la\,\rho\,g(v) & \mbox{in $B_{R}(x_{0})$,} \\
\hfill v = 0 \hfill & \mbox{on $\partial B_{R}(x_{0})$,}
\ec
$$
where $\rho$ is given by \rife{assumpt2}, such that $\mis(\{\ul{v} = L\}) > 0$. Such a $\la$ exists by Theorem \ref{sotto}. Define $w$ as $\ul{v}$ in $B_{R}(x_{0})$ and zero in $\Omega \setminus \ol{B_{R}(x_{0})}$. Then $w$ is a subsolution of
$$
\bc
-\Delta v = \la\,\rho\,g(v) & \mbox{in $\Omega$,} \\
\hfill v = 0 \hfill & \mbox{on $\partial\Omega$,}
\ec
$$
hence, by \rife{assumpt2}, a subsolution of \rife{slin}. If $v$ is the solution of \rife{slin}, we have by Lemma \ref{subsup} that $w \leq v \leq L$. Since $\mis(\{w = L\}) > 0$, we have that $\mis(\{v = L\}) > 0$, and so $\La_{f} < +\infty$.
\end{proof}

\section{Limit equation for approximating problems}

In this section we want to describe the behavior of the approximating sequences of solutions to problem \rife{mainlan} for those values of $\la$ such that a solution does not exist. To fix the ideas, we will work with the Laplace operator, under assumption \rife{assumpt2}, with $h(0) = 0$, $f \equiv 1$, and $\Omega = B_{R}(0)$ a ball with radius $R>0$. As in Section~2, let $\{u_{n}\}$ be the sequence of solutions in $ H^1_0 (B_R) \cap L^{\infty}(B_R)$ of 
\be\label{mainappe}
\bc
-\Delta u_n + h_{n}(u_{n})|\D u_n|^2= \la \quad & \mbox{in $B_R$,}\\
\hfill u_n=0 \hfill & \mbox{on $\partial B_R$,}
\ec
\ee
where
$$
h_{n}(s) = 
\bc
h(s) & \mbox{if $s < \si$ and $h(s) \leq n$,}\\
\hfill n \hfill & \mbox{if $s < \si$ and $h(n) > n$, or if $s \geq \si$.}
\ec
$$
Since $h$ is strictly increasing, if $\si_{n}$ is such that $h(\si_{n}) = n$, we have
$$
h_{n}(s) = 
\bc
h(s) & \mbox{if $s \leq \si_{n}$,}\\
\hfill n \hfill & \mbox{if $s > \si_{n}$,}
\ec
$$
with $\si_{n}$ increasing to $\si$ as $n$ tends to infinity. Reasoning as in Section~4, if we define
$$
H_{n}(s) = \int_{0}^{s}\,h_{n}(t)\,dt\,,
\quad 
\psi_{n}(s) = \int_{0}^{s}\,{\rm e}^{-H_{n}(t)}\,dt\,,
\quad
g_{n}(s) = {\rm e}^{-H_{n}(\psi_{n}^{-1}(s))}\,,
$$
then the function $v_{n} = \psi_{n}(u_{n})$ is a solution in $ H^1_0 (B_R) \cap L^{\infty}(B_R)$ of
$$
\bc
-\Delta v_{n} = \la\,g_{n}(v_{n}) & \mbox{in $B_R$,}\\
\hfill v_{n} = 0 \hfill & \mbox{on $\partial B_R$.}
\ec
$$
An explicit calculation yields
$$
g_{n}(s) = 
\bc
\hfill g(s) \hfill & \mbox{if $s \leq \psi(\si_{n})$,} \\
g(\psi(\si_{n})) - n(s - \psi(\si_{n})) & \mbox{if $\psi(\si_{n}) < s \leq \psi(\si_{n}) + \frac{g(\psi(\si_{n}))}{n}$,} \\
\hfill 0 \hfill & \mbox{if $s > \psi(\si_{n}) + \frac{g(\psi(\si_{n}))}{n}$,}
\ec
$$
where $g(s)$ has been defined in Section~4. Since $g$ is concave, and $\si_{n}$ is an increasing sequence of real numbers, $g_{n}(s) \geq g_{n+1}(s) \geq g(s)$, so that (thanks to Lemma \ref{subsup}), $v_{n} \geq v_{n+1} \geq v$, where $v$ is the solution of \rife{slin} given by Theorem \ref{exisslin}.

It is easy to see (using the boundedness of $g_{n}$) that $\{v_{n}\}$ is bounded in $ H^1_0 (B_R)$ so that it converges (the whole sequence, since it is decreasing) to some function $w$. Since $w \leq v_{n} \leq \psi(\si_{n}) + \frac{g(\psi(\si_{n}))}{n}$, and $\si_{n}$ tends to $\si$, we have $0 \leq w \leq \psi(\si) = L$. Furthermore, if $w(x) < L$, then $g_{n}(v_{n}(x))$ converges to $g(w(x))$, while if $w(x) = L$, then $v_{n}(x) \geq L$ for every $n$ in $\na$, and so $0 \leq g_{n}(v_{n}(x)) \leq g(\si_{n})$; therefore, $g_{n}(v_{n}(x))$ converges to $g(w(x)) = 0$. In other words, $w$ is a solution of \rife{slin}, hence {\sl the} solution of the same problem.

We have therefore proved that if $u_{n}$ is the sequence of solutions of \rife{mainappe}, then $v_{n} = \psi_{n}(u_{n})$ converges to the solution of \rife{slin}. If we suppose to be under the assumptions of Theorem \ref{nhnl1}, the solutions of \rife{slin} have a ``flat'' nonzero measure zone $\ol\omega = \{v = L\}$ if $\la$ is large enough; furthermore, since $\Omega$ is a ball, and $v$ is radially symmetric, then its interior $\omega = B_{r}(0)$ is a ball as well (of radius $r$ for some $0 < r < R$). What happens in this case to $u_{n}$? Using the fact that $v_{n}$ decreases to $v$, it is easy to see that $u_{n}$ converges to some function $u$ which has $\ol\omega$ as the ``flat'' zone $\{u = \si\}$.
\medskip

What can we say about the equation? 
Reasoning as in Proposition \ref{prop}, we have that $\{u_{n}\}$ is bounded in $H^1_0 (B_R)$, that the lower order quasilinear term $h_{n}(u_{n})|\D u_{n}|^{2}$ is bounded in $L^1(B_R)$, that $\D u_{n}$ converges almost everywhere to $\D u$, and that $T_{k}(u_{n})$ strongly converges to $T_{k}(u)$ in $H^1_0 (B_R)$ for every $0 < k < \si$.

Since $h_{n}(u_{n})|\D u_{n}|^{2}$ is bounded in $L^1(B_R)$, it converges weakly$^{*}$ in the sense of measures to some nonnegative bounded Radon measure $\nu$. Standard elliptic results then imply that $u$ is the solution of
$$
\bc
-\Delta u + \nu = \la & \mbox{in $B_R$,} \\
\hfill u = 0 \hfill & \mbox{on $\partial B_R$.}
\ec
$$
In addition, since $T_{k}(u_{n})$ strongly converges to $T_{k}(u)$
for every $0 < k < \si$, we have that
$$
h_{n}(u_{n})|\D u_{n}|^{2}\,\chi_{\{u_{n} < \si - \de\}}
\ \mbox{converges to}\ 
h(u)|\D u|^{2}\,\chi_{\{u < \si - \de\}}
\ \mbox{in $L^1(B_R)$,}
$$
for every $\de > 0$, so that
$$
\nu \LL \{u < \si - \de\} 
=
h(u)|\D u|^{2}\,\chi_{\{u < \si - \de\}}\,,
$$
for every $\de > 0$, which implies
$$
\nu \LL \{u < \si\} 
=
h(u)|\D u|^{2}\,\chi_{\{u < \si\}}\,.
$$
Therefore, the limit equation is
$$
\bc
-\Delta u + h(u)|\D u|^{2}\,\chi_{\{u < \si\}} = \la - \nu \LL \{u = \si\} & \mbox{in $B_R$,} \\
\hfill u = 0 \hfill & \mbox{on $\partial B_R$.}
\ec
$$
Observing that $-\Delta u = 0$ in $B_r$, we have 
$$
0 =
\int_{B_r}\,\la\,\psi - \int_{B_r}\,\psi\,d\nu\,,
\quad
\forall \psi \in C^{1}_0(B_r)\,,
$$
which implies $\nu \LL B_r = \la$, so that $u$ solves the problem
\be\label{connu}
\bc
-\Delta u + h(u)|\D u|^{2}\,\chi_{\{u < \si\}} = \la\,\chi_{\{u < \si\}} - \nu \LL \partial\{u = \si\} & \mbox{in $B_R$,} \\
\hfill u = 0 \hfill & \mbox{on $\partial B_R$,} \\
\hfill u = \si \hfill & \mbox{in $B_r$,}
\ec
\ee
which can be rewritten as
$$
\bc
-\Delta u + h(u)|\D u|^{2} = \la & \mbox{in $B_R\backslash\overline{B_r}$,} \\
\hfill u = 0 \hfill & \mbox{on $\partial B_R$,} \\
\hfill u = \si \hfill & \mbox{on $\partial B_r$.} \\
\ec
$$
Hence, the limit function $u$ is a solution of the equation in the set $\{u < \si\}$. Taking a function $\vp\in C^{1}_{0}(B_R)$ for testing the last equation, and integrating on $B_R \setminus \ol{B_r}$, we obtain
$$
-\int_{\partial B_r}\,\D u \cdot \mathbf{n}\,\vp
+
\int_{B_R \setminus \ol{B_r}}\,\D u \cdot \D\vp
+
\int_{B_R \setminus \ol{B_r}}\,h(u)\,|\D u|^{2}\,\vp
=
\la\,\int_{B_R \setminus \ol{B_r}}\,\vp\,,
$$
where $\mathbf{n}$ is the inward normal to the set $B_r$. 
Since $\D u \equiv 0$ on $B_r$, we can rewrite the above identity as
$$
-\int_{\partial B_r}\!\!\!\D u \cdot \mathbf{n}\,\vp
+
\int_{B_R}\!\!\!\D u \cdot \D\vp
+
\int_{B_R}\!\!\!h(u)\,|\D u|^{2}\,\chi_{B_R \setminus \ol{B_r}}\,\vp
=
\la\,\int_{B_R}\!\!\!\chi_{B_R \setminus \ol{B_r}}\,\vp\,,
$$
and comparing with the weak formulation for \rife{connu}, we obtain
$$
\nu \LL \partial B_r = \D u \cdot \mathbf{n}\,,
$$
where $\D u\cdot \mathbf{n}$ is the normal derivative of $u$ ``coming from inside the set where $u < \si$''. We claim that $\D u\cdot \mathbf{n} = 0$; in other words, since $u$ is radially symmetric, we claim that $u'(r+)$, the right derivative of $u$ at $r$, is zero. Indeed, since we have
$$
\int_{B_R \setminus \ol{B_r}}\,h(u)\,|\D u|^{2} < +\infty\,,
$$
and since $u$ is radially symmetric, then
$$
\int_{r}^{R}\,h(u(s))\,|u'(s)|^{2}\,s^{N-1}\,ds < +\infty\,.
$$
Since, as $s$ is close to $r$ we have $u(s) = \si + u'(r+)\,(s - r) + \mbox{o}(s-r)$, then
$$
\int_{r}\,h(u(s))\,|u'(s)|^{2}\,s^{N-1}\,ds
\approx
\int_{r}\,h(\si + u'(r+)\,(s - r))\,|u'(r+)|^{2}\,s^{N-1}\,ds\,.
$$
If $u'(r+) \neq 0$, we then have a contradiction, since
$$
\int_{r}\,h(u(s))\,|u'(s)|^{2}\,s^{N-1}\,ds
\approx
\int_{r}\,h(\si + u'(r+)\,(s - r))\,ds\,,
$$
and, changing variable,
$$
\int_{r}\,h(u(s))\,|u'(s)|^{2}\,s^{N-1}\,ds
\approx
-u'(r+)\int^{\si}\,h(t)\,dt = +\infty\,,
$$
since $h$ satisfies assumption \rife{assumpt2}.

\medskip

In the general case of $\Omega$ a bounded open subset of $\rn$, we conjecture that, under some regularity assumptions on the boundary of the interior $\omega$ of the set $\ol{\omega}=\{u = \si\}$, we have the same result: i.e., the normal derivative of $u$ ``coming from inside the set where $u < \si$'' is zero as in the radial case, and $u$ solves 
$$
\bc
-\Delta u + h(u)|\D u|^{2} = \la & \mbox{in $\Omega\backslash\overline{\omega}$,} \\
\hfill u = 0 \hfill & \mbox{on $\partial \Omega$,} \\
\hfill u = \si \hfill & \mbox{on $\partial \omega$.} \\
\ec
$$

\section{The case $\Lambda_{f} < +\infty$}

In this section, we will suppose that $h$ is such that $\Lambda_{f} < +\infty$; for the sake of simplicity, we will also suppose that $h(0) = 0$. On the function $f$, we will suppose that it is smooth, and that there exists $\rho > 0$ such that $f(x) \geq \rho$ in $\Omega$. Finally, we will deal with the laplacian operator in a smooth domain $\Omega$. We will study the behaviour of the solutions $u_{\la}$ of
\begin{equation}\label{pila}
\begin{cases}
-\Delta u_{\la} + h(u_{\la})|\nabla u_{\la}|^{2} = \la\,f & \mbox{in $\Omega$,} \\
\hfill u_{\la} = 0 \hfill & \mbox{on $\partial\Omega$,}
\end{cases}
\end{equation}
as $\la$ tends to $\Lambda_{f}$.

We recall that, by Theorem \ref{t2+}, $u_{\la}$ is unique and that $\la < \mu$ implies $u_{\la} \leq u_{\mu}$. Let now $\la$ increase to $\Lambda_{f}$, so that $u_{\la}$ increases to some function $u$. It is easy to see that i)-v) of Proposition \ref{prop} hold true, so that (for example), $u_{\la}$ is bounded in $\huz$, and the lower order term $h(u_{\la})|\nabla u_{\la}|^{2}$ is bounded in $\elle1$. Furthermore, since every $u_{\la}$ is strictly smaller than $\si$, we have $u \leq \si$ almost everywhere in $\Omega$. Is it true that the limit function $u$ is a solution for $\la = \Lambda_{f}$? If the set $\{u = \si\}$ has positive measure, then clearly this is not true, and the considerations of Section~5 apply. If, however, the measure of the set $\{u = \si\}$ is zero, vi) of Proposition \ref{prop} holds true, and for every $\de > 0$ there exists $\tau > 0$ such that
$$
{\rm meas}(\{\si-\tau \leq u \leq \si+\tau\}) \leq \de\,.
$$
Since $u_{\la}$ increases to $u$, this means that for every $\de > 0$ there exists $\tau > 0$ such that
$$
{\rm meas}(\{\si-\tau \leq u_{\la} \leq \si+\tau\}) \leq \de\,, \quad \forall \la < \Lambda_{f}\,.
$$
In other words, the assumptions of Lemma \ref{cpt} are satisfied, and so $h(u_{\la})|\nabla u_{\la}|^{2}$ is compact in $\elle1$. Therefore, we can pass to the limit in the equation satisfied by $u_{\la}$, to have that $u$ is a solution for $\la = \Lambda_{f}$.

What we do not know is whether in the general case the measure of the set $\{u = \si\}$ is zero or not; i.e., if there exists a solution for $\la = \Lambda_{f}$. What we can prove is however that the set $\{u = \si\}$ (where $u$ is as above the limit of $u_{\la}$ as $\la$ increases to $\Lambda_{f}$) cannot be empty. Indeed, suppose that $\{u = \si\} = \emptyset$. Since both $f$ and $\Omega$ are smooth, standard elliptic results imply that $\{u_{\la}\}$ is equi-H\"{o}lder continuous: one can see that this is true by performing the (lipschitz continuous) change of variable as in Section~4 and use De Giorgi's theorem to prove that the solutions of the semilinear equation are equi-H\"{o}lder continuous. Thus, the convergence of $u_{\la}$ to $u$ is uniform, and so $u$ is continuous. This implies that there exists $\eps > 0$ such that $0 \leq u \leq \si-\eps$ in $\Omega$. We claim that if this is the case, than there exist solutions strictly smaller than $\si$ for some $\la > \Lambda_{f}$ (and this contradicts the definition of $\Lambda_{f}$). Indeed, define $\tilde{h}$ as follows:
$$
\tilde{h}(s) = \begin{cases}
\hfill h(s) \hfill & \mbox{if $0 \leq s \leq \si - \frac\eps2$,} \\
\hfill h(\si - \frac{\eps}{2}) \hfill & \mbox{if $s > \si - \frac{\eps}{2}$.}
\end{cases}
$$
Then clearly $u$ solves
$$
-\Delta u + \tilde{h}(u)|\nabla u|^{2} = \Lambda_{f}\,f\,,
$$
and, by the results of \cite{bbm}, there exists a solution $v$ of
$$
-\Delta v + \tilde{h}(v)|\nabla v|^{2} = (\Lambda_{f}+\de)\,f\,,
$$
with $v \geq u$, for every $\de > 0$. It is easy to see, reasoning as in the proof of Theorem \ref{t2+} (or performing the change of variable as in Section~4), that
$$
\norma{u-v}{\elle\infty} \leq C\,\de\,\norma{f}{\elle\infty}\,,
$$
so that, if $\de$ is small enough, $0 \leq v \leq \si - \frac{3\eps}{4}$, and so $v$ is a solution of
$$
-\Delta v + h(v)|\nabla v|^{2} = (\Lambda_{f}+\de)\,f\,,
$$
thus contradicting the definition of $\Lambda_{f}$.

Therefore, we have that, as $\la$ tends to $\Lambda_{f}$, $u_{\la}$ increases to some function $u$ which ``touches'' $\si$. If it touches it on a set of positive measure, then $u$ is not a solution, while it is a solution if the measure of $\{u = \si\}$ is zero. In the first case, it is clear that no solution exists for $\la > \Lambda_{f}$, but what happens if we have a solution for $\la = \Lambda_{f}$? Do solutions exist for $\la > \Lambda_{f}$? We do not know what happens in the general case, but we can give an answer in the radial case: there can be only one solution $u$ such that the set $\{u = \si\}$ is not empty and has zero measure.

To simplify the calculations, rewrite the equation as
\begin{equation}\label{gisotto}
\begin{cases}
-\Delta u + \frac{|\nabla u|^{2}}{g(u)} = \la\,f & \mbox{in $\Omega$,}\\
\hfill u = 0 \hfill & \mbox{on $\partial\Omega$,}
\end{cases}
\end{equation}
where $\Omega = B_{R}(0)$ is a ball, and $f$ is a smooth radial function. On the function $g$ we will suppose that it is a positive, $C^{1}((0,\si))$ function such that
$$
\lim_{s \to \si^{-}}\,g(s) = 0\,,
\quad
\exists \lim_{s \to \si^{-}}\,g'(s) \in [-\infty, 0]\,,
\quad
\int_{0}^{\si}\,\frac{dt}{\sqrt{g(t)}} < +\infty\,,
$$
for some $\si > 0$. The function $g(s) = (1-s)^{\ga}$ obviously satisfies the above assumption if $\ga < 2$ and $\si = 1$. We are going to prove that if $\Lambda$ is such that there exists a $C^{2}$ radial solution $u_{\Lambda}$ such that $u_{\Lambda}(0) = \si$, and $u_{\Lambda}(r) < \si$ for every $r > 0$, then there exist solutions strictly smaller than $\si$ if $\la < \Lambda$ (i.e., $\Lambda_{f} = \Lambda$), and there exists no solution for $\la > \Lambda$.

Since $u_{\Lambda}$ is a radial solution, we have
$$
-u''_{\Lambda}(r) - (N-1)\frac{u'_{\Lambda}(r)}{r} + \frac{(u'_{\Lambda}(r))^{2}}{g(u_{\Lambda}(r))} = \Lambda\, f(r)\,, \quad \forall r > 0\,,
$$
and so, by the de l'H\^opital rule, which can be applied since $u'_{\Lambda}(0) = 0$,
$$
\lim_{r \to 0^{+}}\,\frac{(u'_{\Lambda}(r))^{2}}{g(u_{\Lambda}(r))} = \Lambda\,f(0) + Nu''_{\Lambda}(0)\,.
$$
We know, again by the de l'H\^opital rule, that if it exists
\begin{equation}\label{limsec}
\lim_{r \to 0^{+}}\,\frac{2u'_{\Lambda}(r)\,u''_{\Lambda}(r)}{g'(u_{\Lambda}(r))\,u'_{\Lambda}(r)}
= \lim_{r \to 0^{+}}\,\frac{2u''_{\Lambda}(r)}{g'(u_{\Lambda}(r))}\,,
\end{equation}
then this limit is equal to
$$
\lim_{r \to 0^{+}}\,\frac{(u'_{\Lambda}(r))^{2}}{g(u_{\Lambda}(r))}\,.
$$
Since $u_{\Lambda}$ is a $C^{2}$ function, and the origin is a maximum for $u_{\Lambda}$, we have
$$
\lim_{r \to 0^{+}}\,u''_{\Lambda}(r) = u''_{\Lambda}(0) \leq 0\,. 
$$
Then we may have three possibilities.

\begin{itemize}
\item[i)] if $g'(\si) = -\infty$, then limit \rife{limsec} is equal to zero, and $u''_{\Lambda}(0) = -\frac{\Lambda\,f(0)}{N}$;

\item[ii)] if $g'(\si) < 0$, then limit \rife{limsec} is equal to $\frac{2u''_{\Lambda}(0)}{g'(\si)}$, and $u''_{\Lambda}(0) = -\frac{\Lambda\,f(0)}{N - \frac{2}{g'(\si)}}$;

\item[iii)] if $g'(\si) = 0$ then the limit \rife{limsec} is not finite if $u''_{\Lambda}(0) < 0$, which will yield that $\Lambda\,f(0) + N\,u''_{\Lambda}(0) = -\infty$, a contradiction; therefore, if $g'(\si) = 0$, we have $u''_{\Lambda}(0) = 0$.
\end{itemize}
If we are in cases i) or ii), we have
$$
u_{\Lambda}(r) = \si - C\Lambda\,r^{2} + {\rm o}(r^{2})\,,
$$
where $C$ is either $\frac{f(0)}{N}$ or $\frac{f(0)}{N - \frac{2}{g'(\si)}}$. Let now $\la < \Lambda$, and let $u_{\la}$ be a solution of our problem: by Theorem \ref{t2+} we have that $u_{\la} \leq u_{\Lambda}$. If $u_{\la}(0) < \si$, we have nothing to prove. If $u_{\la}(0) = \si$, then we can repeat the above proof to have  
$$
u_{\la}(r) = \si - C\,\la\,r^{2} + {\rm o}(r^{2})\,,
$$
with $C$ as above. Since $\la < \Lambda$, this contradicts the fact that $u_{\la} \leq u_{\Lambda}$, and so $u_{\la}(0)$ has to be smaller than $\si$. Analogously, we arrive to a contradiction if we suppose that there exists a solution $u_{\la}$ with no ``flat'' zones for $\la > \Lambda$. Thus, in both cases i) and ii) we have that $\Lambda_{f} = \Lambda$ is a supremum, and there is no solution for $\la > \Lambda_{f}$.

Now we turn our attention to case iii), which is more delicate. In this case, since $u''_{\Lambda}(0) = 0$, we have
\be\label{vucinic}
\lim_{r \to 0^{+}}\,\frac{(u'_{\Lambda}(r))^{2}}{g(u_{\Lambda}(r))} = \Lambda\,f(0) > 0\,.
\ee
Define now, for $s \leq \si$, the bounded function
$$
G(s) = \int_{0}^{s}\,\frac{dt}{\sqrt{g(t)}}\,,
$$
and observe that \rife{vucinic} can be rewritten as
$$
\lim_{r \to 0^{+}}\,\left| \frac{d}{dr}\,G(u_{\Lambda}(r))\right|^{2} = \Lambda\,f(0)\,.
$$
Thus, if we define the function $v_{\Lambda}(r) = G(u_{\Lambda}(r))$, then $v_{\Lambda}$ is a decreasing function (since $u_{\Lambda}$ is decreasing) such that $v'_{\Lambda}(0) = -\sqrt{\Lambda\,f(0)}$. Thus,
$$
v_{\Lambda}(r) = G(\si) -\sqrt{\Lambda\,f(0)}\,r + {\rm o}(r)\,.
$$
Let now $\la < \Lambda$. If the solution $u_{\la}$ is such that $u_{\la}(0) = \si$, we can repeat the above proof to have that $v_{\la}(r) = G(u_{\la}(r))$ satisfies
$$
v_{\la}(r) = G(\si) -\sqrt{\la\,f(0)}\,r + {\rm o}(r)\,,
$$
so that $v_{\la}(r) \geq v_{\Lambda}(r)$ in a neighbourhood of~0. This yields that $u_{\la}(r) \geq u_{\Lambda}(r)$ in a neighbourhood of~0, and so $u_{\la}(r) \equiv u_{\Lambda}(r)$ near the origin, a contradiction since $\la \neq \Lambda$. Thus, it has to be $u_{\la}(0) < \si$, as desired. If $\la > \Lambda$, the same argument shows that there is no solution $u_{\la}$ with no ``flat'' zones $\{u_{\la} = \si\}$.


\begin{thebibliography}{99}

\bibitem{ablp}
D.Arcoya, L. Boccardo, T. Leonori, A. Porretta: , {\sl J. Differential Equations}, to appear.

\bibitem{a6}
D. Arcoya, J. Carmona, T. Leonori, P.J. Mart{\'\i}nez-Aparicio, L. Orsina, F. Petitta: Existence and nonexistence of solutions for singular quadratic quasilinear equations, {\sl J. Differential Equations}, {\bf 246} (2009) 4006--4042.

\bibitem{as} D. Arcoya, S. Segura de Leon: Uniqueness for some elliptic equations with lower order terms, {\sl ESAIM: Control, Optimization and the Calculus of Variations}, {\bf 16} (2010), 327--336.

\bibitem{bamu}
G. Barles, F. Murat: Uniqueness and maximum principle for quasilinear elliptic equations with quadratic growth conditions, {\sl Arch. Rational Mech. Anal.}, {\bf 133} (1995), 77--101.

\bibitem{bbm}
A. Bensoussan, L. Boccardo, F. Murat: On a nonlinear partial differential
equation having natural growth terms and unbounded solution, {\sl Ann. Inst. H. Poincar\'e Anal. Nonlin.}, {\bf 5} (1988), 347--364.

\bibitem{hbv} H. Hamid, M.F. Bidaut-V\'eron,  Correlation between two quasilinear elliptic problems with a source term involving the function or its gradient, {\sl C. R. Math. Acad. Sci. Paris}, {\bf 346} (2008), 1251--1256.

\bibitem{boc}
L. Boccardo: On the regularizing effect of strongly increasing lower order terms, {\sl J. Evol. Equ.}, {\bf 3} (2003), 225--236.

\bibitem{b-me}
L. Boccardo: Dirichlet problems with singular and gradient quadratic
lower order terms, {\sl ESAIM: Control, Optimization and the Calculus
of Variations}, {\bf 14} (2008), 411--426.

\bibitem{bg3}
L. Boccardo, T. Gallou\"et: Strongly nonlinear elliptic equations having natural growth terms and $L^1$ data, {\sl Nonlinear Anal. TMA}, {\bf 19} (1992), 573--579.

\bibitem{bgo2}
L. Boccardo, T. Gallou\"et, L. Orsina: Existence and nonexistence of solutions for some nonlinear elliptic equations, {\sl J. Anal.
Math.}, {\bf 73} (1997), 203--223.

\bibitem{BM}
L. Boccardo, F. Murat: Almost everywhere convergence of the gradients of solutions to elliptic and parabolic equations, {\sl Nonlinear Anal.}, {\bf 19} (1992), 581--597.

\bibitem{bmps}
L. Boccardo, F. Murat, J.-P. Puel: $L^\infty$-estimate for nonlinear elliptic partial differential equations and application to an existence result, {\sl SIAM J. Math. Anal.}, {\bf 23} (1992), 326--333.

 \bibitem{brmapo}
H. Brezis, M. Marcus, A.C. Ponce: Nonlinear elliptic equations with measures revisited. In: Mathematical Aspects of Nonlinear Dispersive Equations (J.~Bourgain, C.~Kenig, and S.~Klainerman, eds.), Annals of Mathematics Studies, {\bf 163}, Princeton University Press, Princeton, NJ, 2007, 55--110.

\bibitem{bn}
H. Brezis, L. Nirenberg: Removable singularities for nonlinear elliptic equations, {\sl Topol. Methods Nonlinear Anal.}, {\bf 9} (1997), 201--219.

\bibitem{dupopo}
L. Dupaigne, A. Ponce, A. Porretta: Elliptic equations with vertical asymptotes in the nonlinear term, {\sl J. Anal. Math.}, {\bf 98} (2006), 349--396.

\bibitem{gm}
D. Giachetti, F. Murat: An elliptic problem with a lower order term having singular behaviour, {\sl Boll. Un. Mat. Ital. B}, to appear.

\bibitem{gs}
D. Giachetti, S. Segura de Leon: Quasilinear stationary problems with a quadratic gradient term having singularities, {\sl preprint}.

\bibitem{leo}
T. Leonori: Bounded solutions for some Dirichlet problems
with $L^1 (\Omega)$ data, {\sl Boll. U.M.I.}, {\bf 4} (2007) 785--796.

\bibitem{ll} 
J. Leray, J. L. Lions: Quelques r\'esultats de Vi\v{s}ik sur les probl\'emes elliptiques semi-lin\'eaires par les m\'ethodes de Minty et Browder, {\sl Bull. Soc. Math. France}, \textbf{93} (1965), 97--107.

\bibitem{st}
G. Stampacchia: Le probl\'eme de Dirichlet pour les \'equations elliptiques du second ordre \`a coefficients discontinus, {\sl Ann. Inst. Fourier (Grenoble)}, {\bf 15} (1965), 189--258.

\bibitem{st2} G. Stampacchia: {\sl \'Equations elliptiques du second ordre \'a coefficients discontinus}. S\'em. sur les \'equat. diff. Coll\'ege de France, 1963-64.

\end{thebibliography}
\end{document}